\newcommand{\PartIntSup}[1]{\left\lceil #1\right\rceil}
\def\lgem{\hbox{l\kern-.08em\raise .7ex\hbox{.}\kern-.08eml}}
\def\Lgem{\hbox{L\kern-.08em\raise .7ex\hbox{.}\kern-.08emL}}
\newtheorem{claim}{Claim}
\newtheorem{definition}{Definition}[section]
\newtheorem{theorem} {Theorem}[section]
\newtheorem{lemma} {Lemma}[section]
\newtheorem{proposition}{Proposition}[section]
\newtheorem{conjecture}{Conjecture}[section]
\newtheorem{corollary}{Corollary}[section]
\newenvironment
{proof}{\begin{trivlist} \item[] {\em \textbf{Proof}: }}{\hfill
$\Box$\\
                       \end{trivlist}}
\def\ben{\begin{enumerate}%
\itemsep 1pt plus 1pt minus 1pt}
\def\een{\end{enumerate}}
\def\bit{\begin{itemize}%
\itemsep 1pt plus 1pt minus 1pt}
\def\eit{\end{itemize}}
\thanks{This work has been
partially supported by European project IST FET AEOLUS, PACA region of
France, Ministerio de Educaci\'on y Ciencia of Spain,  European Regional
Development Fund under project TEC2005-03575, Catalan Research Council
under project 2005SGR00256, and COST action 293 GRAAL, and has been done
in the context of the  {\sc crc Corso} with France Telecom.}}
\thanks{This work has been partially supported by European
project IST FET AEOLUS, PACA region of France, Ministerio de Educaci\'on y
Ciencia of Spain,  European Regional Development Fund under project
TEC2005-03575, Catalan Research Council under project 2005SGR00256, and
COST action 293 GRAAL, and has been done in the context of the  {\sc crc
Corso} with France Telecom.}}
\thanks{Mascotte Project - I3S
(CNRS/UNSA) and INRIA - Sophia-Antipolis, France} \thanks{Graph Theory and
Combinatorics group, MA4, UPC, Barcelona, Spain}}
\begin{document}
\makeRR

\section{Introduction}
%%%%%%%%%%%

%%%%%%%%%%%

Traffic grooming is the generic term for packing low rate signals into
higher speed streams (see the
surveys~\cite{BeCo06,DuRo02b,MoLi01,Som01,ZhMu03}). By using traffic
grooming, it is possible to bypass the electronics at the nodes which are not
sources or destinations of traffic, and therefore reducing the cost of the
network. Typically, in a WDM (Wavelength Division Multiplexing) network,
instead of having one SONET Add Drop Multiplexer (ADM) on every wavelength
at every node, it may be possible to have ADMs only for the wavelengths
used at that node (the other wavelengths being optically routed without
electronic switching).

The so called traffic grooming problem consists in minimizing the total
number of ADMs to be used, in order to reduce the overall cost of the
network. The problem is easily seen to be \textsc{NP}-complete for an arbitrary
set of requests. See \cite{logg, FMMSZ06_1, APS07b} for hardness and approximation
results of traffic grooming in rings, trees and star networks.

Here we consider unidirectional SONET/WDM ring networks. In that case the
routing is unique and we have to assign to each request between two nodes
a wavelength and some bandwidth on this wavelength. If the traffic is
uniform and if a given wavelength can carry at most $C$ requests, we can
assign to each request at most $\frac{1}{C}$ of the bandwidth.  $C$ is
known as the \emph{grooming ratio} or \emph{grooming factor}. Furthermore
if the traffic requirement is symmetric, it can be easily shown (by
exchanging wavelengths) that there always exists an optimal solution in
which the same wavelength is given to a pair of symmetric requests. Then
without loss of generality we will assign to each pair of symmetric
requests, called a \emph{circle}, the same wavelength. Then each circle
uses $\frac{1}{C}$ of the bandwidth in the whole ring. If the two
end-nodes are $i$ and $j$, we need one ADM at node $i$ and one at node
$j$. The main point is that if two requests have a common end-node, they
can share an ADM if they are assigned the same wavelength.

The traffic grooming problem for a unidirectional SONET ring with $n$ nodes
and a grooming ratio $C$ has been modeled as a graph
partition problem in both \cite{BeCo03} and \cite{GHLO03} when the request
graph is given by a symmetric graph $R$. To a wavelength $\lambda$
is associated a subgraph $B_\lambda \subset R$ in which each edge corresponds to a
pair of symmetric requests (that is, a circle) and each node to an ADM.
The grooming constraint, i.e. the fact that a wavelength can carry at most
$C$ requests, corresponds to the fact that the number of edges
$|E(B_\lambda)|$ of each subgraph $B_\lambda$ is at most $C$. The cost
corresponds to the total number of vertices used in the subgraphs, and the
 objective is therefore to minimize this number.\\

 This problem has been
 well studied  when the network is a unidirectional ring~\cite{BeCo06,BCM03,ChMo00,DuRo02,DuRo02b,GHLO03,
 GLS98,GRS00,MoLi01,WCLF00,WCVM01}.
%\cite{BeCe03,BCC+05,BCLY04,BeCo03,BeCo06,BCM03,ChMo00,DuRo02,DuRo02b,GHLO03,
 %GLS98,GRS00,Hu02,Hu02b,MoLi01,WCLF00,WCVM01,YuFu02,ZhQi00}.
With the all-to-all set of requests, optimal constructions for a given
grooming ratio $C$ were obtained using tools of graph and design theory,
in particular for grooming ratio $C=3,4,5,6$ and $C\geq
N(N-1)/6$~\cite{BeCo06}.

Most of the research efforts in this grooming problem have been devoted to find the minimum
number of ADMs required either for a  given traffic pattern or set of connection requests (typically
uniform all-to-all communication pattern), or either for a general traffic pattern. However in most cases
the traffic pattern has been considered as an input for the problem for placing the ADMs.
In this paper we consider the traffic grooming problem from a different point of view:
Assuming a given network topology it would be desirable to place the minimum number of
ADMs as possible at each node in such a way that they could be configured to handle different traffic patterns
or graphs of requests. One cannot expect to change the equipment of the network each time the traffic requirements change.

Without any restriction in the graph of requests, the number of required ADMs is given by the worst case,
i.e. when the Graph of Requests is the complete graph.  However, in many cases some restrictions on
the graph of requests might be assumed. From a practical point of view, it is interesting to design a network being able
to support any request graph with maximum degree not exceeding a given
constant. This situation is usual in real optical networks, since due
to technology constraints the number of allowed communications for each
node is usually bounded. This flexibility can also be thought from another
point of view: if we have a limited number of available ADMs to place at
the nodes of the network, then it is interesting to know which is the
maximum degree of a request graph that our network is able to support,
depending on the grooming factor. Equivalently, given a maximum degree and
a number of available ADMs, it is useful to know which values of the
grooming factor the network will support.

%FALTA:

%- FLEXIBILITAT TAMBE EN QUANT AL NUMERO DE REQUESTS.

%- PODEM DIR QUIN ES EL GRAU MAXIM Q POT SUPORTAR UNA XARXA EN FUNCIO DEL
%NUMERO D'ADMs I DEL GROOMING FACTOR.\\

The aim of this article is to provide a theoretical framework to design
such networks with dynamically changing traffic. We study the case when
the physical network is given by an unidirectional ring, which is a widely
used topology (for instance, SONET rings). In \cite{Berry} the authors
consider this problem from a more practical point of view: they call
$t$\emph{-allowable} a traffic matrix where the number of circuits
terminated at each node is at most $t$, and the objective is also to
minimize the number of electronic terminations. They give lower bounds on
the number of ADMs and provide some heuristics.

In addition, we also suppose that each pair of communicating nodes
establishes a two-way communication. That is, each pair $(i,j)$ of
communicating nodes in the ring represents two requests: from $i$ to $j$,
and from $j$ to $i$. Thus, such a pair uses all the edges of the ring,
therefore inducing one unity of load. Hence, we can use the notation
introduced in \cite{BCM03} and consider each request as an edge, and then
again the grooming constraint, i.e. the fact that a wavelength can carry
at most $C$ requests, corresponds to the fact that the number of edges
$|E(B_\lambda)|$ of each subgraph $B_\lambda$ is at most $C$. The cost
corresponds to the total number of vertices used in the subgraphs.

Namely, we consider the problem of placing the minimum number of ADMs in
the nodes of the ring in such a way that the network could support
\emph{any} request graph with maximum degree bounded by a constant
$\Delta$. Note that using this approach, as far as the degree of each node
does not exceed $\Delta$, the network can support a wide range of traffic
demands without reconfiguring the electronics placed at the nodes. The
problem can be
formally stated as follows:\\

\noindent \textsc{Traffic Grooming in Unidirectional Rings with
Bounded-Degree Request Graph}
\begin{itemize}
\item[\textbf{\textsc{Input:}}] Three integers $n$, $C$, and $\Delta$.
\vspace{0.1cm}
%\item[\textbf{\textsc{Output:}}] A partition of $E(K_n)$
\item[\textbf{\textsc{Output:}}] An assignment of $A(v)$ ADMs to each node $v \in V(C_n)$, in such a way that \emph{for any request graph} $R$ with maximum degree at most $\Delta$,
it exists a partition of $E(R)$ into
  subgraphs $B_\lambda$, $1\leq \lambda \leq \Lambda$, such that:

  \begin{itemize}
  \item[$(i)$]  $|E(B_\lambda)|\leq C$ for all $\lambda$; and
  \item[$(ii)$]  each vertex $v \in V(C_n)$ appears in at most $A(v)$ subgraphs.
  \end{itemize}

\vspace{0.1cm}
\item[\textbf{\textsc{Objective:}}] Minimize
$\sum_{v \in V(C_n)}
A(v)$, and the optimum is denoted $A(n, C, \Delta)$.\\
  \end{itemize}

When the request graph is restricted to belong to a subclass of graphs
$\mathcal{C}$ of the class of graphs with maximum degree at most $\Delta$,
then the optimum is denoted $A(n, C, \Delta, \mathcal{C})$. Obviously, for
any subclass of graph $\mathcal{C}$, $A(n, C, \Delta, \mathcal{C})\leq
A(n, C, \Delta)$.

%Note that any solution for regular degree $\Delta$ is also a solution for
%maximum degree $\Delta$. The idea lies in thinking about the \emph{worst
%case} set of requests, because intuitively we have to place the ADMs in
%the vertices \emph{before} knowing which is the set of requests.

In this article we solve the cases corresponding to $\Delta=2$ and
$\Delta=3$ (giving a conjecture for the case $C=4$), and give lower bounds
for the general case. The remainder of the article is structured as
follows: in Section \ref{sec:behavior} we give some properties of the
function $A(n,C,\Delta)$, to be used in the following sections. In Section
\ref{sec:D2} we focus on the case $\Delta=2$, giving a closed formula for
all values of $C$. In Section \ref{sec:D3} we study the case $\Delta = 3$,
solving all cases except the case $C=4$, for which we conjecture the
solution. Finally, Section \ref{sec:conc} is devoted to conclusions and
open problems.

%\section{Main results}

%Note by $ADM(\Delta=\delta=k, N, C)$ the optimal solution in a
%unidirectional ring of $N$ nodes with grooming factor $C$ when the
%set of requests is given by a $k$-regular graph.

\section{Behavior of $A(n,C,\Delta)$}
\label{sec:behavior} \vspace{-0.2cm}In this section we describe some
properties of the function $A(n,C,\Delta)$.
%The first lemma states simple but useful
%properties of $A(n,C,\Delta)$.
\vspace{-0.2cm}
\begin{lemma}
\label{lem:easy} The following statements hold:
\begin{itemize}
\item[(i)] $A(n,C,1) = n$.
\item[(ii)] $A(n,1,\Delta) = \Delta n$.
\item[(iii)] If $C' \geq C$, then $A(n,C',\Delta) \leq A(n,C,\Delta)$.
\item[(iv)] If $\Delta'\geq \Delta$, then $A(n,C,\Delta') \geq A(n,C,\Delta)$.
\item[(v)] $A(n,C,\Delta) \geq n$ for all $\Delta \geq 1$.
\item[(vi)] If $C \geq \frac{n \Delta}{2}$, $A(n,C,\Delta)=n$.
\end{itemize}
\end{lemma}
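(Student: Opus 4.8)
The plan is to treat the six items essentially independently, as each follows directly from the definition of $A(n,C,\Delta)$ together with elementary counting. The one structural point to keep in mind throughout is the quantifier order: the ADM assignment $A(\cdot)$ is fixed first, and only afterwards must a valid partition exist for \emph{every} admissible request graph. This asymmetry drives both the lower bounds and the monotonicity statements, and --- crucially for the lower bounds --- it lets me test the fixed assignment against a \emph{different} adversarial graph for each vertex.

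I would begin with the monotonicity claims (iii) and (iv), which need no computation. For (iii), if $C' \geq C$ then any partition with $|E(B_\lambda)| \leq C$ also satisfies $|E(B_\lambda)| \leq C'$, so every feasible assignment for grooming factor $C$ stays feasible for $C'$, and the optimum cannot increase. For (iv), the family of graphs of maximum degree at most $\Delta$ sits inside the family of maximum degree at most $\Delta'$ when $\Delta' \geq \Delta$; an assignment handling the larger family in particular handles the smaller one, so $A(n,C,\Delta) \leq A(n,C,\Delta')$.

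Next I would prove the universal lower bound (v) and the matching upper bound (vi). For (v), fix any vertex $v$ and take the single-edge graph joining $v$ to a neighbour: this has maximum degree $1 \leq \Delta$, so it is admissible, and any valid partition must place its edge in some subgraph, forcing $A(v) \geq 1$; summing over the $n$ vertices gives $\sum_v A(v) \geq n$. For (vi), the key step is the handshake inequality, which bounds the number of edges of a degree-$\Delta$-bounded graph by $\frac{n\Delta}{2}$. When $C \geq \frac{n\Delta}{2}$ the whole edge set fits into a single subgraph $B_1 = R$, in which every vertex appears at most once, so $A(v)=1$ for all $v$ is feasible; together with (v) this yields equality.

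Finally, for (i) and (ii) I would combine a worst-case construction with (v). In (i), a graph of maximum degree $1$ is a matching, so placing each edge in its own subgraph makes every vertex appear at most once; hence $A(v)=1$ suffices and meets the lower bound from (v). In (ii), with $C=1$ each subgraph carries exactly one edge, so a vertex of degree $d$ must lie in $d$ distinct subgraphs; for each fixed $v$ the star $K_{1,\Delta}$ centred at $v$ is admissible and forces $A(v) \geq \Delta$, and since the fixed assignment must cope with all of these, $A(v) \geq \Delta$ for every $v$. The converse assignment of $\Delta$ ADMs per vertex, using one subgraph per edge, is feasible, so $A(n,1,\Delta)=\Delta n$. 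I do not expect a genuine obstacle in any item; the only thing demanding care is exploiting the quantifier order in (ii) and (v) --- testing each vertex against its own adversarial graph rather than seeking a single global worst case, which sidesteps any parity issue that a single $\Delta$-regular witness would otherwise raise.
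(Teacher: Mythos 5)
Your proof is correct, and on items (iii), (iv), and (vi) it coincides with the paper's argument (feasibility inclusion for the monotonicity claims, and the handshake bound $|E(R)|\leq \frac{n\Delta}{2}$ so that one subgraph suffices). Where you genuinely diverge is in the lower bounds and their logical ordering. The paper certifies (i) with a single global witness (a perfect matching) and (ii) with a $\Delta$-regular request graph, and then obtains (v) by combining (i) with (iv); you instead prove (v) directly by testing the fixed assignment against a separate one-edge request graph for each vertex, force (ii) with a star $K_{1,\Delta}$ centred at each vertex in turn, and then recover (i) from (v). This per-vertex adversarial testing is a small but real gain in robustness: a perfect matching exists only when $n$ is even, and a $\Delta$-regular graph requires $n\Delta$ even and $n\geq \Delta+1$, so the paper's global witnesses strictly speaking need a patch for the remaining parities, whereas your single edges and stars exist for every vertex as soon as $n\geq 2$ (respectively $n\geq \Delta+1$, an assumption both proofs implicitly share, since otherwise no vertex can attain degree $\Delta$). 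Both arguments ultimately exploit the same quantifier structure --- the ADM assignment is fixed before the adversary chooses the request graph --- and yield identical bounds, so the difference is one of witness construction and dependency order rather than of substance.
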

\begin{proof}
\begin{itemize}
\item[\emph{(i)}] The request graph can consist in a perfect matching, so any solution uses 1 ADM
per node.
\item[\emph{(ii)}] A $\Delta$-regular graph can be partitioned into $\frac{n \Delta}{2}$
disjoint edges, and we cannot do better.
\item[\emph{(iii)}] Any solution for $C$ is also a solution for $C'$.
\item[\emph{(iv)}] If $\Delta'\geq \Delta$, the subgraphs with maximum degree at most $\Delta$ are a
subclass of the class of graphs with maximum degree at most $\Delta'$.
\item[\emph{(v)}] Combine \emph{(i)} and \emph{(iv)}.
\item[\emph{(vi)}] In this case all the edges of the request graph fit into one subgraph.
\end{itemize}
\vspace{-.5cm}
\end{proof}

Since we are interested in the number of ADMs required at each node, let
us consider the following definition:

\begin{definition}
Let $M(C,\Delta)$
%and $\alpha(C,\Delta)$
be the least positive number $M$ such that, for any $n\geq 1$, the
inequality $A(n,C,\Delta)\leq M n$ %-\alpha $
holds.
\end{definition}

\begin{lemma}
\label{lem:integer} $M(C,\Delta)$
%and $\alpha(C,\Delta)$ are
is a natural number.
\end{lemma}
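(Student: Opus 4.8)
The plan is to show $M(C,\Delta)$ is well-defined as the infimum of a set of reals and then argue it must be an integer. The definition asks for the least $M$ with $A(n,C,\Delta) \le Mn$ for all $n \ge 1$, equivalently $M \ge A(n,C,\Delta)/n$ for all $n$. So $M(C,\Delta) = \sup_{n \ge 1} A(n,C,\Delta)/n$. The first task is to check this supremum is finite: by Lemma~\ref{lem:easy}(ii) and (iv), $A(n,C,\Delta) \le A(n,1,\Delta) = \Delta n$, so $A(n,C,\Delta)/n \le \Delta$ for every $n$, giving $M(C,\Delta) \le \Delta < \infty$. Thus $M(C,\Delta)$ is a well-defined real number in the interval $[1,\Delta]$ (the lower bound coming from Lemma~\ref{lem:easy}(v), which gives $A(n,C,\Delta)/n \ge 1$).

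The key structural fact I would exploit is \emph{subadditivity} of $A(\cdot,C,\Delta)$ in the first argument: given an optimal ADM assignment for $n_1$ nodes and one for $n_2$ nodes, placing them on two disjoint arcs of a ring with $n_1+n_2$ nodes yields a valid assignment (any request graph on $n_1+n_2$ nodes restricts to request graphs on the two parts, and the grooming constraints are inherited), so $A(n_1+n_2,C,\Delta) \le A(n_1,C,\Delta) + A(n_2,C,\Delta)$. By Fekete's subadditivity lemma, $\lim_{n\to\infty} A(n,C,\Delta)/n$ exists and equals $\inf_n A(n,C,\Delta)/n$. This is a clean limit, but it is the \emph{infimum}, whereas $M(C,\Delta)$ is the \emph{supremum} of the same sequence, so I cannot conclude integrality directly from Fekete alone; I need a different handle on the supremum.

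The decisive observation is that $A(n,C,\Delta)$ grows essentially linearly with a fixed integer slope. I would prove that there is an integer $m$ and a constant $c$ with $A(n,C,\Delta) = mn - O(1)$, or more precisely that $A(n,C,\Delta)/n$ is nonincreasing for large $n$ and converges to an integer from above, forcing the supremum to be attained at a finite $n$ and to be rational with controlled denominator. Concretely, using $A(n,C,\Delta) \le A(n-1,C,\Delta) + M(C,\Delta)$-type bounds together with the per-node lower bound $A(n,C,\Delta)\ge n$, one pins $A(n,C,\Delta)$ between $n$ and $\Delta n$; then the supremum $\sup_n A(n,C,\Delta)/n$ is a maximum attained at some finite $n_0$, hence equals the rational number $A(n_0,C,\Delta)/n_0$. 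Finally, I would invoke the periodicity/tiling argument: if $A(n_0,C,\Delta)/n_0 = p/q$ in lowest terms, then concatenating $q$ copies of the optimal solution on $n_0$ nodes and comparing with the optimum on $qn_0$ nodes, combined with the reverse inequality $A(qn_0,C,\Delta) \ge qn_0 \cdot M(C,\Delta)$ only when $M$ is achieved, shows the fractional part cannot persist and $M(C,\Delta)$ must be integral.

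\textbf{The main obstacle} I anticipate is precisely reconciling the supremum definition of $M$ with the infimum delivered by subadditivity: a priori $A(n,C,\Delta)/n$ need not be monotone, so I must establish that the ratio is maximized at a computable finite $n_0$ and that this maximum is an integer rather than merely rational. The cleanest route is likely a direct \emph{tiling argument} showing that any assignment good for $n_0$ nodes can be replicated around the ring, so that if $M(C,\Delta) = A(n_0,C,\Delta)/n_0$ were non-integral, one could build an assignment with strictly fewer than $\lceil M(C,\Delta)\rceil$ ADMs per node on average for some multiple of $n_0$, contradicting the definition of $M$ as the \emph{least} uniform bound. Verifying that this replication respects the bounded-degree requirement for \emph{all} request graphs — not just those that decompose along the tiling — is the delicate point and will need a short argument that an arbitrary request graph's edges can be rerouted or grouped to fit the periodic ADM layout.
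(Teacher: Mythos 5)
Your proposal breaks at its first structural claim: the subadditivity $A(n_1+n_2,C,\Delta) \leq A(n_1,C,\Delta)+A(n_2,C,\Delta)$ is false, and so is the later claim that $\sup_n A(n,C,\Delta)/n$ is attained at some finite $n_0$. The justification you give for subadditivity --- ``any request graph on $n_1+n_2$ nodes restricts to request graphs on the two parts'' --- ignores exactly the request edges joining the two arcs; the adversary may place all its requests across that cut, and the two separately optimal assignments reserve no ADMs for them. The paper itself supplies a counterexample to both claims: by Proposition \ref{prop:degree2}, $A(n,C,2)=2n-(C-1)$, so for $C\geq 2$ one has $A(n_1+n_2,C,2)=A(n_1,C,2)+A(n_2,C,2)+(C-1)$, strict \emph{super}additivity; moreover $A(n,C,2)/n = 2-(C-1)/n$ is strictly increasing with supremum $2$ that is never attained, so there is no finite $n_0$ with $M(C,2)=A(n_0,C,2)/n_0$. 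With these two facts gone, Fekete's lemma, the ``maximum attained at $n_0$'', the rational value $p/q$, and the concluding tiling argument (whose delicate point, which you flag yourself, is precisely the false cross-edge-free restriction property) all collapse. Note also that integrality of $M$ neither follows from nor implies $M=A(n_0)/n_0$ for some $n_0$: in the case $\Delta=2$, $M=2$ is an integer while every ratio $A(n)/n$ is non-integral.

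The paper's proof goes in the opposite --- and valid --- direction: instead of composing small solutions into large ones, it restricts large solutions to subsets. In any valid assignment, a subset $S$ of nodes together with their ADMs must by itself support every request graph of maximum degree at most $\Delta$ whose edges lie entirely inside $S$, since the partition of such a request graph only involves vertices of $S$; hence $A(|S|,C,\Delta)\leq \sum_{v\in S}A(v)$. Now suppose $r<M<r+1$ with $r$ a positive integer. Because per-node ADM counts are integers, averaging shows that in an assignment witnessing $A(n,C,\Delta)\leq Mn$ a fraction of the nodes bounded away from zero has at most $r$ ADMs each; letting $n\to\infty$, these nodes form arbitrarily large sets, and by the restriction property $r$ ADMs per node then suffice for every $n$, i.e. $A(n,C,\Delta)\leq rn$ for all $n$, contradicting the minimality of $M$ since $r<M$. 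The two ingredients your attempt is missing are exactly these: the integrality of the per-node counts (turning ``average below $r+1$'' into ``many nodes with at most $r$''), and the fact that restriction downward to subsets is legitimate while composition upward of disjoint solutions is not.
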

\begin{proof}
First of all, we know by Lemma \ref{lem:easy} that, for any $C\geq 1$,
$A(n,C,\Delta) \leq A(n,1,\Delta) = \Delta n$. Thus $A(n,C,\Delta)$ is
upper-bounded by $\Delta n$. On the other hand, since any vertex may
appear in the request graph, $A(n,C,\Delta)$ is lower-bounded by $n$.

Suppose now that $M$ is not a natural number. That is, suppose that $r < M
< r+1$ for some positive natural number $r$. This means that, for each
$n$, there exists at least a fraction $\frac{r}{M}$ of the vertices with
at most $r$ ADMs. For each $n$, let $V_{n,r}$ be the subset of vertices of
the request graph with at most $r$ ADMs. Then, since $\frac{r}{M}>0$, we
have that $\lim_{n \rightarrow \infty}|V_{n,r}|=\infty$. In other words,
there is an arbitrarily big subset of vertices with at most $r$ ADMs per
vertex. But we can consider a request graph with maximum degree at most
$\Delta$ on the set of vertices $V_{n,r}$, and this means that with $r$
ADMs per node is enough, a contradiction with the optimality of $M$.
\end{proof}
%Finally, $\alpha$ is an integer because of the integrality of $M$, $n$,
%and $A(n,C,\Delta)$, and it is positive because if $\alpha$ was negative,
%then we could take an arbitrarily big subset of vertices using exactly $M$
%ADMs per node and arrive again at contradiction.
%\end{proof}\vspace{-0.5cm}\begin{definition}
%Let $M(C,\Delta)$ and $\alpha(C,\Delta)$ be the positive integers given by
%Lemma \ref{lem:integer}.
%\end{definition}
%\vspace{-0.05cm} Intuitively, this means that each vertex must appear in
%at most $M(C,\Delta)$ subgraphs. Henceforth, we will focus on finding the
%values of $M(C, \Delta)$ and $\alpha (M,\Delta)$, mainly the former
%because is the term that will give the dominant cost. Again,
If the
request graph is restricted to belong to a subclass of graphs
$\mathcal{C}$ of the class of graphs with maximum degree at most $\Delta$,
then %$A(n, C, \Delta, \mathcal{C})$ also satisfies Lemma \ref{lem:integer} and
the corresponding positive integer is denoted $M(C, \Delta,
\mathcal{C})$.
 Again, for any subclass $\mathcal{C}$, $M(C, \Delta,
\mathcal{C})\leq M( C, \Delta)$.
%The same remarks apply to
%$\alpha(C,\Delta, \mathcal{C})$.

Combining Lemmas \ref{lem:easy} and \ref{lem:integer}, we know that
$M(C,\Delta)$ decreases by integer hops when $C$ increases.
%, until meeting
%the lower bound of 1.
One would like to have a better knowledge of those
hops. The following lemma gives a sufficient condition to assure than
$M(C,\Delta)$ decreases by at most 1 when $C$ increases by 1.
\begin{lemma}
\label{lem:one_by_one} If $C > \Delta $, then $M(C+1,\Delta) \geq
M(C,\Delta)-1$.
\end{lemma}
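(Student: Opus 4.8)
The plan is to prove the stronger finite inequality $A(n,C,\Delta) \le A(n,C+1,\Delta) + n$ for every $n \ge 1$, and then read off the lemma. Indeed, by the definition of $M(C+1,\Delta)$ we have $A(n,C+1,\Delta) \le M(C+1,\Delta)\,n$ for all $n$, so the finite inequality gives $A(n,C,\Delta) \le \big(M(C+1,\Delta)+1\big)\,n$ for all $n$. Since $M(C+1,\Delta)+1$ is a positive integer and $M(C,\Delta)$ is, by its definition together with Lemma \ref{lem:integer}, the least positive integer with $A(n,C,\Delta)\le M(C,\Delta)\,n$ for all $n$, we conclude $M(C,\Delta) \le M(C+1,\Delta)+1$, which is exactly the claim. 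Note that the hypothesis $C>\Delta$ is not needed here; it will only enter in the construction below.

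To establish the finite inequality I would take an optimal assignment $A'$ for the triple $(n,C+1,\Delta)$, so that $\sum_v A'(v) = A(n,C+1,\Delta)$, and upgrade it to a feasible assignment for $(n,C,\Delta)$ by placing one extra ADM at every node, i.e. $A''(v)=A'(v)+1$; this costs exactly $n$ additional ADMs. It then remains to check that $A''$ handles every request graph $R$ of maximum degree at most $\Delta$ with grooming factor $C$. Fixing such an $R$, I would take a partition of $E(R)$ into subgraphs of size at most $C+1$ witnessing the feasibility of $A'$. The only parts violating the new constraint are those with exactly $C+1$ edges; from each such part I remove one edge, which turns it into a valid part with $C$ edges without increasing anybody's membership count, and I am then left to re-home the removed edges into fresh parts of size at most $C$.

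The heart of the argument is to re-home the removed edges while charging at most one extra subgraph-membership to each node, so that the single spare ADM per node suffices. Here the hypothesis $C>\Delta$ is essential: an oversized part has $C+1>\Delta$ edges but maximum degree at most $\Delta$, hence is not a star, so no node is incident to all of its edges and there is genuine freedom in which edge to delete. I would use this slack to choose the deleted edges so that, collectively, they form a matching of $R$ (equivalently, so that the graph $H$ formed by the removed edges can be split into parts of at most $C$ edges that are pairwise vertex-disjoint). Once this is achieved, each node lies in at most one new part, its membership count grows by at most one, and $A''$ is feasible. As a sanity check on the budget, the number of oversized parts, and hence of removed edges, is at most $|E(R)|/(C+1) \le \Delta n / (2(C+1)) < n/2$, comfortably within the $n$ extra ADMs available.

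The step I expect to be the main obstacle is precisely this selection of the deleted edges. The difficulty is that a single node may belong to several oversized parts at once (up to its degree), so naive local deletions could charge it more than one extra membership; one must delete edges in a globally consistent way, choosing a system of representatives that avoids vertex conflicts, or, when a clean matching is unavailable, grouping all edges removed around a common node into a single new part so that its count still rises by only one. Controlling this interaction, using that every oversized part carries strictly more edges than its maximum degree, is where the condition $C>\Delta$ does the real work and is the technical crux of the proof.
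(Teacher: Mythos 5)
Your proposal follows exactly the paper's route: start from an optimal solution for grooming factor $C+1$, keep every part with at most $C$ edges, delete one edge from each part having $C+1$ edges, and pay for the deleted edges with an extra budget of $n$ ADMs, the inequality $\frac{\Delta n}{C+1}\leq n$ (equivalently $\Delta\leq C+1$) being where the hypothesis $C>\Delta$ enters. Your reduction from the finite inequality $A(n,C,\Delta)\leq A(n,C+1,\Delta)+n$ to the statement about $M$ is also the same as the paper's; the paper merely phrases it as a proof by contradiction, while you phrase it directly.

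The difference lies in what happens after the deletion, and here your self-diagnosis is accurate: the step you flag as the ``technical crux'' --- choosing the deleted edges so that every node is charged at most one extra subgraph-membership (for instance, so that the deleted edges form a matching, or more generally so that every connected component of the graph of deleted edges has at most $C$ edges and these components are taken as the new parts) --- is a genuine obstacle, and your proposal does not prove it. You should know, however, that the paper's own proof does not prove it either: it only counts the \emph{total} number of extra ADMs, at most $\frac{\Delta n}{C+1}\leq n$, and concludes. Under the problem's definition this is not by itself sufficient, because the assignment $A(v)$ must be fixed before the request graph is known, and a node incident to several oversized parts could a priori be charged several extra memberships; a total budget of $n$ does not show that the uniform assignment $A''(v)=A'(v)+1$ is feasible for every $R$. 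So your attempt is, if anything, more careful than the published argument: it makes explicit the combinatorial selection claim (a system of representatives of the oversized parts whose union has all components of size at most $C$) that is needed to turn the counting into a valid fixed assignment, but it leaves that claim unproven --- meaning your proof is incomplete at precisely the point where the paper's proof is silent.
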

\begin{proof}
Suppose that $M(C+1,\Delta)\leq M(C,\Delta)-2$, and let us arrive at
contradiction. Beginning with a solution for $C+1$, we will see that
adding $n$ ADMs (i.e. increasing $M$ by 1) we obtain a solution for $C$, a
contradiction with the assumption $M(C,\Delta)\geq M(C+1,\Delta)+2$.

The request graph has at most $\frac{\Delta n}{2}$ edges, and then in a
solution for $C+1$ the number of subgraphs with exactly $C+1$ edges is at
most $\frac{\Delta n}{2(C+1)}$. All the subgraphs with $C$ or less edges
can also be used in a solution for $C$. We remove an edge from each one of
the subgraphs with $C+1$ edges, obtaining at most $\frac{\Delta
n}{2(C+1)}$ edges, or equivalently at most $\frac{\Delta n}{C+1}$
additional ADMs. We want this number to be at most $n$, i.e.
$$\frac{\Delta n}{C+1} \leq n\ ,$$
which is equivalent to $\Delta \leq C+1$, and this is true by hypothesis.
\end{proof}
%\subsection{General Lower Bound}

%Si mirem be la fita que vem trobar a partir del grau mig: $M \geq
%\frac{C+1}{2C}\Delta$ es immediat que $M \geq \frac{\Delta}{2}$ per
%qualsevol valor de $C$. Per tant a la taula aquella $M$ mai no baixarà de
%$\frac{\Delta}{2}$

%Si tenim en compte el nombre de vertexs del graf de requests, es facil
%veure que $M \leq \frac{N}{2C}\Delta$

\noindent We provide now a lower bound on $M(C,\Delta)$.

%\iffalse
%\paragraph{Taking into account the number of edges}

%The main fact that obstructs the number of ADMs to fall down is that we
%cannot assure the existence of cycles of length smaller or equal than $C$.
%In this case, the best graphs we can use are trees with $C$ edges.
%\begin{proposition}
%\label{prop: bound2} If the set of requests is given by a $\Delta$-regular
%graph of girth greater than $C$, then
%$$
%A(n, C, \Delta) \geq \PartIntSup{\frac{C+1}{C} \frac{\Delta}{2} n}
%$$
%\end{proposition}
%\begin{proof}
%The proof consists just in counting how many ADMs would be needed in a
%decomposition of the request graph into trees with $C$ edges, which is the
%best possible. Since the number of edges is $\frac{n \Delta}{2}$, the
%number of trees is $\frac{n \Delta}{2C}$, and this yields the result
%taking into account that each tree has $C+1$ vertices.
%\end{proof}

%
%\paragraph{Taking into account the average degree}
%\fi

\begin{proposition}[General Lower Bound]
\label{prop: bound1} $ M(C,\Delta)\geq \PartIntSup{\frac{C+1}{C}
\frac{\Delta}{2}}. $
\end{proposition}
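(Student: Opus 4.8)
The plan is to prove the bound by exhibiting, for suitable $n$, a single \emph{hard} request graph $R$ of maximum degree $\Delta$ for which every admissible edge-partition is forced to use many vertex-appearances, and then to read off the asymptotic constant. The link to $M(C,\Delta)$ is this observation: if an ADM assignment $\{A(v)\}$ is feasible, then for our chosen $R$ there is a partition into subgraphs $B_\lambda$ with each $v$ appearing in at most $A(v)$ of them, whence $\sum_{v}A(v)\ge\sum_{\lambda}|V(B_\lambda)|$ for that partition; consequently $A(n,C,\Delta)\ge\min_{\text{partitions of }R}\sum_{\lambda}|V(B_\lambda)|$. So it suffices to lower-bound this minimum for one well-chosen $R$ on $n$ vertices and then divide by $n$.

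The choice of $R$ is the crux. I would take $R$ to be a $\Delta$-regular graph of girth strictly greater than $C$. Being $\Delta$-regular makes the number of edges as large as possible, namely $|E(R)|=\frac{\Delta n}{2}$; having girth larger than $C$ forces every subgraph with at most $C$ edges to be acyclic, i.e. a forest, since any cycle it contained would be a cycle of $R$ using more than $C$ edges. The existence of such graphs for infinitely many $n$ is classical: for $\Delta\ge 3$ by the Erd\H{o}s--Sachs theorem or explicit cage-type constructions; for $\Delta=2$ one simply takes a single cycle $C_n$ with $n>C$; and when $\Delta$ is odd one restricts to even $n$ so that a $\Delta$-regular graph exists. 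I expect this existence statement, rather than the counting, to be the only genuinely non-elementary ingredient.

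With $R$ fixed, the counting is short. Consider any admissible partition into $\Lambda$ subgraphs $B_\lambda$, each with $e_\lambda:=|E(B_\lambda)|\le C$; we may assume each $B_\lambda$ has at least one edge and no isolated vertices, since discarding empty subgraphs and isolated vertices only decreases the count. Then each $B_\lambda$ is a forest with $e_\lambda\ge 1$ edges, so $|V(B_\lambda)|=e_\lambda+(\text{number of tree components})\ge e_\lambda+1$. Summing, and using $\Lambda\ge|E(R)|/C=\frac{\Delta n}{2C}$,
\[
\sum_{\lambda=1}^{\Lambda}|V(B_\lambda)|\;\ge\;\sum_{\lambda=1}^{\Lambda}(e_\lambda+1)\;=\;|E(R)|+\Lambda\;\ge\;\frac{\Delta n}{2}+\frac{\Delta n}{2C}\;=\;\frac{C+1}{C}\,\frac{\Delta}{2}\,n .
\]
Hence $A(n,C,\Delta)\ge\frac{C+1}{C}\frac{\Delta}{2}\,n$ for these values of $n$, which by the definition of $M$ yields $M(C,\Delta)\ge\frac{C+1}{C}\frac{\Delta}{2}$. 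Finally, since $M(C,\Delta)$ is a natural number by Lemma~\ref{lem:integer}, the bound may be rounded up to $\PartIntSup{\frac{C+1}{C}\frac{\Delta}{2}}$, as claimed.

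The main obstacle, as noted, is guaranteeing that the extremal graph exists; everything else (the forest estimate $v\ge e+1$ and the pigeonhole bound on $\Lambda$) is routine. It is reassuring that the two degenerate checks already reproduce known values: the long cycle for $\Delta=2$ recovers the path-covering argument, and setting $C=1$ gives $\PartIntSup{\Delta}=\Delta$, consistent with Lemma~\ref{lem:easy}.
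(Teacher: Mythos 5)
Your proof is correct and takes essentially the same route as the paper: both use a $\Delta$-regular request graph of girth greater than $C$ as the hard instance, observe that every subgraph with at most $C$ edges must then be a forest, and count vertices against edges to obtain $A(n,C,\Delta)\geq \frac{C+1}{C}\frac{\Delta}{2}\,n$, finishing by integrality. Your bookkeeping (summing $|V(B_\lambda)|\geq e_\lambda+1$ over subgraphs together with $\Lambda\geq |E(R)|/C$) is just a more explicit rendering of the paper's observation that the average degree inside each subgraph is at most $\frac{2C}{C+1}$, with the added care of justifying the existence of the extremal graph via Erd\H{o}s--Sachs, a detail the paper leaves implicit.
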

\begin{proof}
%By Lemma \ref{lem:integer}, $A(n,C,\Delta)$ is of the form
%$A(n,C,\Delta)=M n-\alpha$, where $M$ and $\alpha$ are natural numbers
% depending only on $C$ and $\Delta$. This means that each vertex must
%appear in at most $M(C,\Delta)$ subgraphs.
Since we have to consider all the graphs with maximum degree at most
$\Delta$, we can restrict ourselves to $\Delta$-regular graphs with girth
greater than $C$. Then, the best one could do is to partition the edges of
the request graph into trees with $C$ edges. In this case, the sum of the
degrees of all the vertices in each subgraph is $2C$. Thus, the average
degree of the vertices in all the subgraphs is at most $\frac{2C}{C+1}$,
hence it exists at least one vertex $v$ with average degree not greater
than $\frac{2C}{C+1}$. Therefore, $v$ must appear in at least $M_v$
subgraphs, with $\frac{2C}{C+1} M_v \geq \Delta$. Thus, $M(C,\Delta) \geq
\PartIntSup{\frac{C+1}{C} \frac{\Delta}{2}}$.
\end{proof}
%\vspace{-0.5cm} In particular, Proposition \ref{prop: bound1} implies that
%$M(C, \Delta) \geq \PartIntSup{\frac{\Delta}{2}}$, which can be
%arbitrarily big. Observe that this bound is not in contradiction with
%\emph{(vi)} of Lemma \ref{lem:easy}, because $\alpha(C,\Delta)$ may
%increase when $C$ increases, keeping $A(n,C,\Delta)$ equal to $n$ when $C
%\geq \frac{n \Delta}{2}$.

\begin{corollary}\label{corodelta}
If the set of requests is given by a $\Delta$-regular graph of girth
greater than $C$, then
$$
M(C, \Delta) \geq \PartIntSup{\frac{\Delta}{2}}
$$
\end{corollary}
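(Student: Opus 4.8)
The plan is to derive the corollary directly from Proposition \ref{prop: bound1}, exploiting the fact that the proof of that proposition already operates entirely within the restricted class. First I would note that the argument establishing Proposition \ref{prop: bound1} never appeals to arbitrary graphs of maximum degree at most $\Delta$: it selects, among all admissible request graphs, precisely the $\Delta$-regular graphs of girth greater than $C$, for which every subgraph with at most $C$ edges is forced to be a forest (a cycle would have at most $C$ edges, contradicting the girth hypothesis). Consequently the very same chain of inequalities applies when the request graph is restricted to this subclass $\mathcal{C}$, and we obtain $M(C,\Delta,\mathcal{C}) \geq \PartIntSup{\frac{C+1}{C}\frac{\Delta}{2}}$.

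The remaining step is a monotonicity observation. Since $C \geq 1$, we have $\frac{C+1}{C} = 1 + \frac{1}{C} \geq 1$, and hence $\frac{C+1}{C}\frac{\Delta}{2} \geq \frac{\Delta}{2}$. As the ceiling function is nondecreasing, taking ceilings preserves this inequality, giving $\PartIntSup{\frac{C+1}{C}\frac{\Delta}{2}} \geq \PartIntSup{\frac{\Delta}{2}}$. Chaining the two estimates yields $M(C,\Delta,\mathcal{C}) \geq \PartIntSup{\frac{\Delta}{2}}$, which is exactly the claimed bound, with the left-hand side understood as the optimum $M(C,\Delta,\mathcal{C})$ over the subclass $\mathcal{C}$ of $\Delta$-regular graphs of girth greater than $C$.

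I do not expect a genuine obstacle here, since the statement is in fact a weakening of Proposition \ref{prop: bound1}. The only points that require care are conceptual rather than technical: making explicit that the inequality refers to the restricted optimum and not to the unrestricted $M(C,\Delta)$, and confirming that the proposition's argument applies verbatim to the subclass, which hinges solely on the girth condition guaranteeing that every admissible subgraph is acyclic. Once these are acknowledged, the bound $\PartIntSup{\frac{\Delta}{2}}$ follows immediately as a clean, $C$-independent consequence that will be convenient to invoke in the subsequent analysis of the cases $\Delta = 2$ and $\Delta = 3$.
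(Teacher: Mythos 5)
Your proposal is correct and follows the same route as the paper, whose entire proof is ``Trivial from Proposition \ref{prop: bound1}'': you simply spell out the two implicit steps, namely that the proposition's lower-bound argument is already carried out on $\Delta$-regular graphs of girth greater than $C$, and that $\frac{C+1}{C}\geq 1$ makes $\PartIntSup{\frac{C+1}{C}\frac{\Delta}{2}} \geq \PartIntSup{\frac{\Delta}{2}}$. Your added care about reading the left-hand side as the restricted optimum $M(C,\Delta,\mathcal{C})$ is a reasonable clarification of the corollary's slightly informal statement, not a departure from the paper's argument.
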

\begin{proof}
Trivial from Proposition \ref{prop: bound1}.
\end{proof}

%\subsection{Combined lower bound}

%\begin{corollary}
%$$
%A(n, C, \Delta) \geq \max \left\{    \PartIntSup{\frac{2 \Delta}{C}}n,\
%\PartIntSup{\frac{C+1}{C} \frac{\Delta}{2}n} \right\}
%$$
%\end{corollary}
%\begin{proof}
%Just combine Proposition \ref{prop: bound1} and Proposition \ref{prop:
%bound2}.
%\end{proof}

%\iffalse \noindent It is easy to check that the lower bound of Proposition
%\ref{prop: bound1} is tighter than the lower bound of Proposition
%\ref{prop: bound2}.

%\paragraph{Open questions...} A natural question is: when can we assure the
%existence of cycles
%of length smaller or equal than $C$? (i.e. the \textbf{girth} of the request graph)\\

%Let $R$ be the request graph, as usual. If $n \geq |E(R)|$ then we know
%that there are cycles (because $R$ is not a tree in this case), but these
%cycles can be very large, until length $n$. Thus, a sufficient condition
%is that $C\geq n \geq |E(R)|$.

%\fi

%\subsection{General Upper Bound}

If the value of $C$ is large in comparison to $n$ the number of ADMs required per node may be
less than $M(C,\Delta)$ as stated in the following lemma:

%%%%%COMPTE AMB AIXOOOOOOOOOO
\begin{lemma}
\label{lem:upper} $ A(n,C,\Delta) \leq \PartIntSup{\frac{n \Delta}{2C}}n.$
\end{lemma}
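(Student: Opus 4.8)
The plan is to exhibit a single, uniform ADM assignment that is simultaneously feasible for \emph{every} request graph of maximum degree at most $\Delta$, and then merely count. First I would set $\Lambda = \PartIntSup{\frac{n\Delta}{2C}}$ and place exactly $A(v) = \Lambda$ ADMs at every node $v \in V(C_n)$, so that the total cost is $\sum_{v \in V(C_n)} A(v) = n\Lambda = \PartIntSup{\frac{n\Delta}{2C}}n$. Everything then reduces to verifying that this fixed assignment satisfies conditions $(i)$ and $(ii)$ of the problem for an arbitrary admissible $R$.

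Next I would fix any request graph $R$ with maximum degree at most $\Delta$. The degree bound gives $|E(R)| = \frac{1}{2}\sum_{v} \deg_R(v) \leq \frac{n\Delta}{2}$. Listing the edges of $R$ in any order and cutting the list into consecutive blocks of $C$ edges yields a partition of $E(R)$ into $\PartIntSup{\frac{|E(R)|}{C}} \leq \PartIntSup{\frac{n\Delta}{2C}} = \Lambda$ subgraphs $B_\lambda$, each with $|E(B_\lambda)| \leq C$; this settles condition $(i)$. Condition $(ii)$ is then immediate and is the crux of why the bound takes this shape: since the whole partition consists of at most $\Lambda$ subgraphs, every vertex $v$ trivially lies in at most $\Lambda = A(v)$ of them, irrespective of its actual degree in $R$. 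As $R$ was arbitrary, the uniform assignment is feasible, whence $A(n,C,\Delta) \leq n\Lambda = \PartIntSup{\frac{n\Delta}{2C}}n$.

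There is no genuine analytic obstacle here; the argument is a pure counting bound, and the crude block-partition suffices precisely because it uses nothing about $R$ beyond the global edge count $|E(R)| \leq \frac{n\Delta}{2}$. The only point demanding care is the quantifier order in the problem statement: the numbers $A(v)$ must be committed to \emph{before} $R$ is revealed and must serve all admissible $R$ at once. This is exactly why I would produce one fixed value $\Lambda$ together with a partition scheme that applies uniformly to every $R$, rather than tailoring the assignment to a particular worst-case instance. As a sanity check, when $C \geq \frac{n\Delta}{2}$ one gets $\Lambda = 1$ and hence $A(n,C,\Delta) \leq n$, recovering the equality $A(n,C,\Delta)=n$ of Lemma \ref{lem:easy}\,\emph{(vi)}.
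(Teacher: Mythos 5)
Your proof is correct and follows essentially the same route as the paper: partition the at most $\frac{n\Delta}{2}$ edges greedily into blocks of at most $C$ edges, obtaining at most $\PartIntSup{\frac{n\Delta}{2C}}$ subgraphs, and bound each vertex's appearances by the total number of subgraphs. Your additional remarks on the quantifier order (fixing the uniform assignment before $R$ is revealed) and the consistency check against Lemma \ref{lem:easy}\,\emph{(vi)} simply make explicit what the paper leaves implicit.
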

\begin{proof}
%By Lemma \ref{lem:integer}, $A(n,C,\Delta)$ is of the form
%$A(n,C,\Delta)=M n-\alpha$, where $M$ and $\alpha$ are natural numbers
% depending only on $C$ and $\Delta$.
The number of edges of a request graph with degree $\Delta$ is at most $\frac{n \Delta}{2}$. We can
partition this edges greedily into subsets of at most $C$ edges, obtaining
at most $\PartIntSup{ \frac{n \Delta}{2C}}$ subgraphs. Thus, in this
partition each vertex appears in at most $\PartIntSup{ \frac{n
\Delta}{2C}}$ subgraphs, as we wanted to prove.
\end{proof}

Notice that this is not in contradiction with Corollary \ref{corodelta},
since the inequality of the definition of $M(C,\Delta)$ must hold for
\emph{all} values of $n$.

%\vspace{-0.45cm} Note that we claimed that $M(C,\Delta)$ does not depend on $n$,
%whereas the bound of Lemma \ref{lem:upper} seems to state that $M(C,\Delta) \leq \PartIntSup{\frac{n \Delta}{2C}}$.
%This is not contradictory, since, al

%since since although the exact

%value of $M(C,\Delta)$ is independent of $n$, it is possible to bound its
%value in terms of $n$. This bound is useful when $n$ is small related to
%$C$. \vspace{-0.1cm}
\section{Case $\Delta=2$}
\label{sec:D2}
%Placing ourselves in the worst case, consider degree $2$ in all the
%vertices (2-regular graphs of requests). In this case the set of requests
%is made of disjoint cycles. As we have explained in the previous section,
%the idea is to consider all the possible request graphs. Then, besides the
%small values of $n$, we have that

\begin{proposition}
\label{prop:degree2}
%If the set of requests is given by a $2$-regular
%graph of girth greater than $C$, then
$A(n, C, 2)=2n-(C-1).$
\end{proposition}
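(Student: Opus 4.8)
The plan is to prove two matching bounds. Since a vertex of degree at most $2$ lies in at most two blocks of any partition, and since every node may be made to carry an edge (so $A(v)\geq 1$ is necessary), I may restrict attention to optimal assignments with $A(v)\in\{1,2\}$. Writing $S=\{v:A(v)=1\}$ and $s=|S|$, the objective equals $2n-s$, so the statement is equivalent to showing that the largest feasible value of $s$ is exactly $C-1$ (I treat the range $C\le n-1$; for $C\ge n$ the value $n$ is given by Lemma \ref{lem:easy}(vi)).

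The crux is the lower bound, i.e. that no valid assignment can have $s\geq C$. The key observation is a \emph{forced-chain} phenomenon: if a degree-$2$ vertex $v$ has $A(v)=1$, then its two incident edges must lie in the \emph{same} block. Hence, along any path or cycle of the request graph, a maximal run of $t$ consecutive vertices of $S$ (each of degree $2$) forces the $t+1$ edges spanning that run into a single block, which therefore contains at least $t+1$ edges. Now I let the adversary choose $R$ to be a Hamiltonian cycle on the $n$ nodes in which all $s$ vertices of $S$ are consecutive; this is legitimate because the request graph is an arbitrary graph of maximum degree $2$ and its cyclic order is unconstrained. If $s\geq C$, the resulting run forces a block with at least $C+1>C$ edges, violating condition $(i)$. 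Thus feasibility forces $s\le C-1$, i.e. $A(n,C,2)\geq 2n-(C-1)$.

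For the matching upper bound I exhibit an assignment achieving $s=C-1$: pick any set $S$ of $C-1$ nodes, set $A(v)=1$ for $v\in S$ and $A(v)=2$ otherwise, and verify it handles every $R$ with $\Delta(R)\le 2$. Given such an $R$ (a disjoint union of paths and cycles), I partition its edges by \emph{cutting at every vertex not in $S$}, taking each resulting segment as its own block. Each $S$-vertex is then interior to exactly one block, so it lies in a single block as required, while every other vertex lies in at most two blocks. It remains to check the grooming constraint $(i)$: a segment lies between two consecutive cut-eligible (non-$S$) vertices and has in its interior only vertices of $S$; since $|S|=C-1$ globally, such a segment contains at most $C-1$ interior $S$-vertices and hence at most $C$ edges. (The only degenerate case is a cycle component made entirely of $S$-vertices, which has at most $|S|=C-1$ edges and fits in one block.) This shows $A(n,C,2)\le 2n-(C-1)$, completing the proof.

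The main obstacle is isolating the forced-chain argument and recognizing that the adversary's freedom to reorder the Hamiltonian cycle lets all $1$-ADM nodes be clustered into a single run; once this is in place, both the lower bound (a run of $C$ such nodes overflows a block) and the upper bound (runs of length $\le C-1$ always fit) follow from the same edge-counting identity.
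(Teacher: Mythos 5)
Your proof is correct and follows essentially the same strategy as the paper's: an adversarial cycle clustering the $1$-ADM vertices for the lower bound (the paper uses a cycle of length $C+1$ through all $C$ such vertices and gets its contradiction from the ADM count at the cut vertices, where you get it from block capacity via the forced-chain observation --- two faces of the same obstruction), and, for the upper bound, a decomposition placing each $1$-ADM vertex in the interior of its block (your ``cut at every non-$S$ vertex'' rule is a cleaner packaging of the paper's case analysis on consecutive runs of $a_i$'s). If anything, your write-up is more careful than the paper's in making explicit the restriction $C \le n-1$, outside of which the stated formula fails and Lemma \ref{lem:easy}(vi) takes over.
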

\begin{proof}
Consider the case when the request graph is $2$-regular and has girth
greater than $C$. Then, a feasible solution is obtained by placing $2$
ADMs at each vertex. What we do is to count in how many ADMs we can assure
that we can place only one
ADM.\\
Let us see first that we cannot use $1$ ADM in more than $C-1$ vertices.
Suppose this, i.e. that we have $1$ ADM in $C$ vertices and $2$ in all the
others. Then, consider a set of requests given by a cycle $H$ of length
$C+1$ containing all the $C$ vertices with $1$ ADM inside it, and other
cycles containing the remaining vertices. In this situation, we are forced
to use $2$ subgraphs for the vertices of $H$, and at least $2$ vertices of
$H$ must appear in both subgraphs. Hence we will need more than $1$ ADM in
some vertex that had initially only $1$ ADM.

Now, let us see that we can always save $C-1$ ADMs. Let
$\{a_0,a_1,\ldots,a_{C-2}\}$ be the set of vertices with only $1$ ADM,
that we can choose arbitrarily. We will see that we can decompose the set
of requests in such a way that the vertices $a_i$ always lie in the middle
of a path or a cycle, covering in this way both requests of each vertex
with only $1$ ADM. Indeed, suppose first that two of these vertices
(namely, $a_i$ and $a_j$) do not appear consecutively in one of the
disjoint cycles of the set of requests. Let $b_i$ be the nearest vertex to
$a_i$ in the cycle in the direction of $a_j$, and conversely for $b_j$
($b_i$ may be equal to $b_j$ if $a_i$ and $a_j$ differ only on one
vertex). Then, consider two paths (eventually, cycles) of the form
$\{b_i,a_i,\ldots\}$ and $\{b_j,a_j,\ldots\}$, to assure that both $a_i$
and $a_j$ lie in the middle of the subgraph. We do the same construction
for each pair of non-consecutive vertices.

Now, consider all the vertices $\{a_0,\ldots,a_i,\ldots,a_{t-1}\}$ which
are adjacent in the same cycle of the request graph, with $t\leq C-1$.
%Otherwise, suppose
%that all $C-1$ vertices lie consecutive in a cycle of the request graph.
Let $b_0$ be the nearest vertex to $a_0$ different from $a_1$, and let
$b_{t-1}$ be the nearest vertex to $a_{t-1}$ different from $a_{t-2}$.
Then, consider a subgraph with the path (or cycle, if $b_0=b_{t-1}$)
$\{b_0a_0a_1\ldots a_{t-1}b_{t-1}\}$.
\end{proof}

\vspace{-0.8cm}
\section{Case $\Delta=3$}
\label{sec:D3} We study the cases $C=3$ and $C\geq 5$ in Sections
\ref{sec:C3} and \ref{sec:5,3}, respectively. We discuss the open case
$C=4$ in Section \ref{sec:conc}.
\subsection{Case $C=3$}
\label{sec:C3} We study first the case when the request graph is a
bridgeless cubic graph in Section \ref{sec:bridgeless}, and then the case
of a general request graph in Section \ref{sec:general}.
\subsubsection{Bridgeless Cubic Request Graph}
\label{sec:bridgeless}
 We will need some preliminary graph theoretical concepts. Let
$G=(V,E)$ be a graph. For $A,B\subseteq V$, an \emph{A-B path} in $G$ is a
path from $x$ to $y$, with $x \in A$ and $y \in B$.
%\begin{definition}[Separate]
If $A,B\subseteq V$ and $X\subseteq V\cup E$ are such that every
\emph{A-B} path in $G$ contains a vertex or an edge from $X$, we
say that $X$ \emph{separates} the sets $A$ and $B$ in $G$. More
generally we say that $X$ \emph{separates} $G$ if $G-X$ is
disconnected, that is, if $X$ separates in $G$ some two vertices
that are not in $X$. A separating set of vertices is a
\emph{separator}.
%\end{definition}
%\begin{definition}[Cut-vertex; Bridge]
A vertex which separates two other vertices of the same component is a
\emph{cut-vertex}, and an edge separating its ends is a \emph{bridge}.
Thus, the bridges in a graph are precisely those edges that do not lie on
any cycle.
%\end{definition}
%A graph with a bridge is shown in \textsc{Fig.} \ref{fig:bridge}. The
%end-vertices of the bridge are the cut-vertices.
%\begin{figure}[h!tb]
%\begin{center}
%\includegraphics[width=4.5cm]{Fig/bridge}
%\caption{Example of a graph with a bridge} \label{fig:bridge}
%\end{center}
%\end{figure}
%\begin{definition}[Matching]
A set $M$ of independent edges in a graph $G=(V,E)$ is called a
\emph{matching}.
%$M$ is a matching of $U\subseteq V$ if every
%vertex in $U$ is incident with an edge in $M$. The vertices in $U$
%are then called \emph{matched} (by $M$); vertices not incident
%with any edge of $M$ are \emph{unmatched}.
%\end{definition}
%\begin{definition}[$k$-factor]
A $k$-regular spanning subgraph is called a $k$-factor. Thus, a subgraph
$H\subseteq G$ is a 1-factor of $G$ if and only if $E(H)$ is a matching of
$V$.
%\end{definition}
We recall a well known result from matching theory:
\begin{theorem}[Petersen, 1981]
Every bridgeless cubic graph has a 1-factor.
\end{theorem}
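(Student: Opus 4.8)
The plan is to deduce Petersen's theorem from Tutte's $1$-factor theorem, which characterizes the graphs having a $1$-factor: a graph $G$ has a $1$-factor if and only if $o(G-S) \leq |S|$ for every $S \subseteq V(G)$, where $o(\cdot)$ denotes the number of connected components of odd order. Accordingly, it suffices to verify this \emph{Tutte condition} for an arbitrary bridgeless cubic graph $G$. So I would fix an arbitrary set $S \subseteq V(G)$, let $\mathcal{C}$ be the family of odd components of $G-S$, and for each $C \in \mathcal{C}$ denote by $e_C$ the number of edges joining $C$ to $S$. The goal becomes showing $|\mathcal{C}| \leq |S|$.

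The heart of the argument is to prove that $e_C \geq 3$ for every odd component $C$, and this is where both hypotheses enter. A parity argument gives the right congruence: since every vertex has degree $3$, the degrees in $C$ sum to $3|C|$, and the incidences not counted by $e_C$ pair up into edges internal to $C$, so $e_C \equiv 3|C| \equiv |C| \pmod 2$. As $|C|$ is odd, $e_C$ is odd, hence $e_C \geq 1$. The bridgeless hypothesis then rules out $e_C = 1$, because a single edge joining $C$ to the rest of $G$ would be a bridge (an edge lying on no cycle). Combining oddness with $e_C \neq 1$ forces $e_C \geq 3$.

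With this bound in hand the counting closes immediately. Summing over all odd components and using that each edge counted on the left has an endpoint in $S$ while each vertex of $S$ has degree exactly $3$, I obtain
\[
3\,|\mathcal{C}| \;\leq\; \sum_{C \in \mathcal{C}} e_C \;\leq\; 3\,|S|,
\]
so that $o(G-S) = |\mathcal{C}| \leq |S|$. Since $S$ was arbitrary, the Tutte condition holds and $G$ has a $1$-factor.

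The step I expect to be the main obstacle is precisely the inequality $e_C \geq 3$: the trivial bounds give only $e_C \geq 1$ (or $e_C \geq 2$ from bridgelessness alone), and it is the \emph{interaction} of the parity congruence with the absence of bridges that upgrades this to $e_C \geq 3$. Once that local estimate is established, the global double-counting is routine.
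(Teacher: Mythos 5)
Your proof is correct and complete. Note, however, that the paper does not prove this statement at all: it recalls Petersen's theorem as a classical result from matching theory (the date in the paper, ``1981'', is a typo for 1891) and immediately uses it to derive Corollary 4.1 on decomposing bridgeless cubic graphs into a 1-factor plus disjoint cycles. So there is no proof in the paper to compare against; what you have written is the standard modern derivation from Tutte's 1-factor theorem, and every step checks out. In particular, the two delicate points are handled properly: the parity computation $3|C| = 2\,e(C) + e_C$ gives $e_C \equiv |C| \pmod 2$, and since all edges leaving a component of $G-S$ necessarily end in $S$, the case $e_C=1$ would indeed exhibit a bridge, so $e_C \geq 3$; the double count $\sum_C e_C \leq \sum_{v\in S}\deg(v) = 3|S|$ is valid because each such edge has exactly one endpoint in $S$, and the degenerate case $S=\emptyset$ is covered automatically (a cubic graph has no odd components, consistent with $3\,o(G-S)\leq 0$). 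The only caveat worth stating explicitly is that your argument presupposes Tutte's theorem, which is historically and logically a heavier tool than Petersen's result itself; Petersen's original 1891 proof, and later elementary proofs (e.g., Frink's), avoid it. For the purposes of this paper, where the theorem is quoted as known, your derivation is a perfectly acceptable self-contained justification.
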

Then, if we remove a 1-factor from a cubic graph, what it remains
is a disjoint set of cycles.
\begin{corollary}
\label{prop:bridgeless} Every bridgeless cubic graph has a
decomposition into a 1-factor and disjoint cycles.
\end{corollary}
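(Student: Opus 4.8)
The plan is to read this off directly from Petersen's theorem, which supplies the only nontrivial ingredient. Let $G=(V,E)$ be a bridgeless cubic graph. First I would invoke Petersen's theorem to obtain a 1-factor $M \subseteq G$; by the definition recalled above, a 1-factor is a 1-regular spanning subgraph, so $E(M)$ is a perfect matching of $G$.

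Next I would analyze the complementary spanning subgraph $H = G - E(M)$, namely the subgraph on $V$ with edge set $E \setminus E(M)$. Since $G$ is 3-regular and exactly one edge incident to each vertex belongs to the perfect matching $M$, every vertex of $H$ has degree $3-1=2$. Hence $H$ is a 2-regular spanning subgraph of $G$.

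Then I would apply the elementary fact that a 2-regular graph is a vertex-disjoint union of cycles: starting from any vertex and following edges one can never get stuck, because each vertex has degree exactly $2$, and the finiteness of $V$ forces the resulting walk to close up into a cycle; repeating on the remaining vertices decomposes $H$ entirely into disjoint cycles. Consequently $\{E(M), E(H)\}$ is the desired partition of $E(G)$ into a 1-factor and a collection of disjoint cycles.

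There is essentially no hard step here, since all the content lies in Petersen's theorem and the remainder is a degree count together with the standard characterization of $2$-regular graphs. The only point deserving a word of care is that $M$ must be \emph{spanning} --- a perfect matching rather than merely a matching --- which is exactly what the hypothesis ``1-factor'' guarantees, so that the degree drops uniformly to $2$ at every vertex and no isolated vertices are created in $H$.
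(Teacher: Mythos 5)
Your proof is correct and follows exactly the same route as the paper: apply Petersen's theorem to extract a 1-factor, observe that removing it from a cubic graph leaves a 2-regular spanning subgraph, and note that such a subgraph is a disjoint union of cycles. The paper states this in a single sentence, and your write-up simply fills in the degree count and the standard characterization of 2-regular graphs.
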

An example of a decomposition of a bridgeless cubic graph into disjoints
cycles and a 1-factor is depicted in \textsc{Fig.} \ref{fig:1factor}a.

\begin{figure}[h!tb]
\begin{center}
\vspace{-0.5cm}
\includegraphics[width=14.5cm]{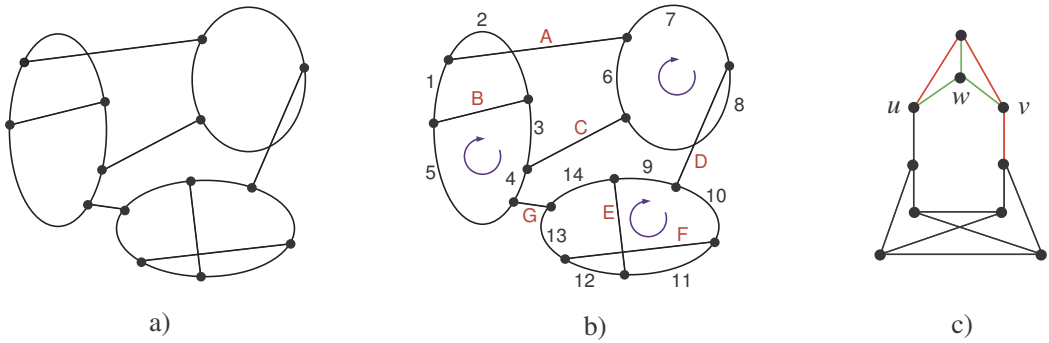}
\caption{\textbf{a)} Decomposition of a bridgeless cubic graph into
disjoints cycles and a 1-factor. \textbf{b)} Decomposition of a bridgeless
cubic graph into paths of length $3$. \textbf{c)} Cubic bridgeless graph
used in the proof of Proposition \ref{prop:degree3}} \label{fig:1factor}
\vspace{-0.5cm}
\end{center}
\end{figure}

%We will use Proposition \ref{prop:bridgeless} in the following
%result.
\begin{proposition}
\label{prop:degree3} Let $\mathcal{C}$ be the class of bridgeless cubic
graphs. Then,
$$
M(3,3, \mathcal{C})=2.
$$
\end{proposition}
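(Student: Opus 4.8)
The plan is to establish the two inequalities $M(3,3,\mathcal{C}) \geq 2$ and $M(3,3,\mathcal{C}) \leq 2$ separately; since this quantity is a positive integer (Lemma \ref{lem:integer}), the two bounds together pin down the value.

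For the lower bound I would run the argument of Proposition \ref{prop: bound1}, but taking care to keep the worst-case example inside the class $\mathcal{C}$: let the request graph be a bridgeless cubic graph $G$ of girth at least $4$ (such graphs exist, e.g.\ the one in \textsc{Fig.} \ref{fig:1factor}c). Because the girth exceeds $C=3$, no subgraph with at most $3$ edges can contain a cycle, so every block $B_\lambda$ of a feasible partition is a forest and hence satisfies $|V(B_\lambda)| \geq |E(B_\lambda)| + 1$. As each block carries at most $3$ edges, the number of blocks is $\Lambda \geq \frac{3n/2}{3} = n/2$, and summing gives $\sum_\lambda |V(B_\lambda)| \geq |E(G)| + \Lambda \geq \frac{3n}{2} + \frac{n}{2} = 2n$. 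Since the total number of ADMs equals $\sum_\lambda |V(B_\lambda)|$, every feasible placement for this $G$ needs at least $2n$ ADMs, so $M(3,3,\mathcal{C}) \geq 2$ (this also matches Corollary \ref{corodelta}).

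For the upper bound I would show that, for \emph{every} bridgeless cubic $G$, the edge set decomposes into paths of length $3$ in which each vertex appears in exactly two of them (as illustrated in \textsc{Fig.} \ref{fig:1factor}b); placing $2$ ADMs per node then suffices. By Corollary \ref{prop:bridgeless} write $G = M \cup F$ with $M$ a $1$-factor and $F$ a $2$-factor, i.e.\ a disjoint union of cycles. I orient each cycle of $F$ consistently, so that every vertex has a unique in-edge and out-edge. For each matching edge $bc \in M$ I form the path $a\text{-}b\text{-}c\text{-}d$, where $ab$ is the in-edge of $b$ and $cd$ is the in-edge of $c$. The middle edge of each such $P_4$ is a distinct matching edge, and each cycle edge, being the in-edge of exactly one vertex, is consumed exactly once; this produces a genuine decomposition into $n/2$ paths with $3$ edges each. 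A vertex $v$ then appears only in the path carrying its own matching edge (as an interior vertex, using that edge together with its in-edge) and in the path carrying the matching edge of its cycle-successor (as an endpoint, via its out-edge), that is, in exactly two blocks. Hence $A(n,3,3,\mathcal{C}) \leq 2n$ and $M(3,3,\mathcal{C}) \leq 2$.

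The delicate point is the upper bound: one must not merely produce a $P_4$-decomposition, but simultaneously prevent any vertex from being the endpoint of three different paths, which would force a third ADM. The orientation of the $2$-factor is exactly what rules this out, assigning each vertex one in-edge (used interiorly) and one out-edge (used as an endpoint); and it is precisely here that bridgelessness is indispensable, since it is what guarantees the existence of the $1$-factor through Petersen's theorem. I would also verify that the four vertices of each $a\text{-}b\text{-}c\text{-}d$ are distinct, which follows from $M$ and $F$ being edge-disjoint in a simple graph.
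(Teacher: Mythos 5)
Your proof is correct, and it splits into one half that mirrors the paper and one half that takes a genuinely different route. The upper bound is exactly the paper's argument: both of you invoke Corollary \ref{prop:bridgeless} to write the graph as a 1-factor plus disjoint cycles, orient the cycles, and attach to each matching edge its two incoming cycle edges, producing a decomposition into paths of length $3$ in which every vertex is interior to the path of its own matching edge and an endpoint of exactly one other path (your extra verifications --- that the four vertices of each $P_4$ are distinct, and that no vertex is an endpoint of two paths --- are details the paper leaves implicit; note that $a\neq d$ really uses the orientation, i.e.\ that each vertex has a unique out-edge, not merely edge-disjointness of $M$ and $F$). Where you diverge is the lower bound. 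The paper exhibits a specific $10$-vertex bridgeless cubic graph (\textsc{Fig.}~\ref{fig:1factor}c), fixes a vertex $w$ with a single ADM, and argues by a case analysis (``one can check\ldots'') that $2n-1$ ADMs cannot suffice. You instead take any bridgeless cubic graph of girth at least $4$ and run a clean counting argument: since $C=3$ forces every block to be a forest, $\sum_\lambda |V(B_\lambda)| \geq |E(G)| + \Lambda \geq \frac{3n}{2}+\frac{n}{2} = 2n$, so every feasible assignment needs $2n$ ADMs. This is essentially Proposition \ref{prop: bound1} and Corollary \ref{corodelta} carried out \emph{inside} the class $\mathcal{C}$ --- which is precisely the care one must take, since those statements are phrased for the unrestricted problem --- and it is arguably tighter reasoning than the paper's sketch, as it avoids any unverified case check.

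One factual slip to fix: the graph of \textsc{Fig.}~\ref{fig:1factor}c is \emph{not} an example of girth at least $4$ --- the paper itself notes it contains triangles (those through $w$). Your argument is unaffected, because all it needs is the existence of some bridgeless cubic graph of girth at least $4$; cite instead $K_{3,3}$, the $3$-cube, or the Petersen graph.
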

\begin{proof}
Let us proof that we can always partition the request graph into paths
with 3 edges in such a way that each vertex appears in 2 paths. To do so,
we take the decomposition given by Proposition \ref{prop:bridgeless},
together with a clockwise orientation of the edges of each cycle. With
this orientation, each edge of the 1-factor has two \emph{incoming} and
two \emph{outgoing} edges of the cycles. For each edge of the 1-factor we
take its two incoming edges, and form in this way a path of length $3$. It
is easy to verify that this is indeed a decomposition into paths of length
three. For instance, if we do this construction in the graph of
\textsc{Fig.} \ref{fig:1factor}a, and we label the edges of the 1-factor
as \{A,B,\ldots,G\} and the ones of the cycles as \{1,2,\ldots,14\} (see
\textsc{Fig.} \ref{fig:1factor}b), we obtain the following decomposition:
$$
\{1,A,6\},\{5,B,2\},\{3,C,8\},\{7,D,9\},\{14,E,11\},\{10,F,12\},\{4,G,13\}
$$
%\begin{figure}[h!tb]
%\begin{center}
%\includegraphics[width=6cm]{bridge_dec}
%\caption{Decomposition of a bridgeless cubic graph into paths of length
%$3$} \label{fig:brige_dec}
%\end{center}
%\end{figure}
Now let us see that we cannot do better, i.e. with $2n-1$ ADMs. If such a
solution exists, there would be at least one vertex with only $1$ ADM, and
the average of the number of ADMs of all the other vertices must not
exceed $2$. In order to see that this is not always possible, consider the
cubic bridgeless graph on $10$ vertices of \textsc{Fig.}
\ref{fig:1factor}c. Let $w$ be the vertex with only $1$ ADM. This graph
 has no triangles except those containing $w$.
Since we can use only $1$ ADM in $w$, we must take all its requests in one
subgraph. It is not possible to cover the $4$ remaining requests of the
nodes $u$ and $v$ in one subgraph,
%(the best we can do is to take the red path)
 and thus without loss of generality we will need $3$ ADMs in $u$. With these constraints,
one can check that the best solution uses $20$ ADMs, that is $2n>2n-1$.
%\begin{figure}[h!tb]
%\begin{center}
%\vspace{-0.5cm}
%\includegraphics[width=2.7cm]{badgraph2}
%\caption{Cubic bridgeless graph used in the proof of Proposition
%\ref{prop:degree3}} \label{fig:badgraph} \vspace{-0.5cm}
%\end{center}
%\end{figure}
%\vspace{-0.5cm}
\end{proof}
\vspace{-0.5cm} Taking a look at the proof we see that the only property
that we need from the bridgeless cubic graph is that we can partition it
into a 1-factor and disjoint cycles. Hence, we can relax the hypothesis of
Proposition \ref{prop:degree3} to obtain the following corollary:
\begin{corollary}
\label{prop:degree3bis} Let $\mathcal{C}$ be the class of graphs of
maximum degree at most 3 that can be partitioned into disjoints cycles and
a 1-factor. Then
\begin{itemize}
\item[(i)] $M(3, 3, \mathcal{C})=2$; and
\item[(ii)] $M(C, 3,\mathcal{C})\leq 2$ for any $C\geq4$.
\end{itemize}
\end{corollary}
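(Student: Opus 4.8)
The plan is to recycle the two halves of the proof of Proposition \ref{prop:degree3}, after observing that neither half uses cubicity beyond the decomposition into a 1-factor and vertex-disjoint cycles that \emph{defines} the class $\mathcal{C}$.

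For the upper bound in (i) — and simultaneously for (ii) — I would reuse the path construction almost verbatim. Given $G \in \mathcal{C}$, fix a partition of $E(G)$ into a 1-factor $F$ and a family of vertex-disjoint cycles (guaranteed by the definition of $\mathcal{C}$), and orient each cycle clockwise. For every matching edge $uv \in F$, form the subgraph consisting of $uv$ together with the incoming cycle edge at $u$ and the incoming cycle edge at $v$, whenever these exist. As in Proposition \ref{prop:degree3}, these subgraphs partition $E(G)$: each cycle edge is the incoming edge of exactly one vertex, hence lands in exactly one matching edge's subgraph. Every resulting subgraph is a path with at most $3$ edges, so the grooming constraint $|E(B_\lambda)| \leq C$ holds for $C = 3$ and, a fortiori, for every $C \geq 3$. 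It then remains to bound the number of subgraphs containing a fixed vertex $v$. Since the non-matching part of $G$ is a union of cycles, every vertex has degree either $1$ or $3$ in $G$. If $v$ lies on a cycle (degree $3$), it occurs in the subgraph of its own matching edge, via its incoming edge, and in the subgraph carrying its outgoing edge, and in no other; that is at most $2$. If $v$ has degree $1$, it lies on no cycle and occurs only in the subgraph of its matching edge. Either way each vertex appears in at most $2$ subgraphs, so $A(n, 3, 3, \mathcal{C}) \leq 2n$ and hence $M(3, 3, \mathcal{C}) \leq 2$. Because each subgraph has at most $3$ edges, the very same partition is a valid grooming for every $C \geq 3$ (this is the monotonicity of Lemma \ref{lem:easy}(iii) applied inside the subclass), which yields $M(C, 3, \mathcal{C}) \leq 2$ for all $C \geq 4$ and proves (ii).

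For the matching lower bound in (i), I would invoke the example already exhibited in the proof of Proposition \ref{prop:degree3}: the cubic bridgeless graph on $10$ vertices of \textsc{Fig.} \ref{fig:1factor}c, which forces $2n$ ADMs and so rules out any solution with $2n-1$. By Corollary \ref{prop:bridgeless} every bridgeless cubic graph — this one in particular — decomposes into a 1-factor and disjoint cycles, so the example belongs to $\mathcal{C}$ and the bound $M(3, 3, \mathcal{C}) \geq 2$ transfers unchanged. Combining the two bounds gives $M(3, 3, \mathcal{C}) = 2$.

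The only genuinely new point, and the single place where the non-cubic members of $\mathcal{C}$ intervene, is the vertex-counting step of the upper bound: when $v$ has degree $1$ the associated path degenerates to at most two edges, and one must check that this degeneration can only \emph{decrease} the number of subgraphs meeting $v$, never increase it — which is immediate once one notes that a degree-$1$ vertex is an endpoint of no cycle edge and therefore never appears as the head of an incoming edge assigned elsewhere. This is the step I would write out most carefully; everything else is a direct transcription of the cubic argument.
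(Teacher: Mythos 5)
Your proof is correct and follows exactly the paper's route: the paper establishes this corollary simply by remarking that the proof of Proposition \ref{prop:degree3} uses nothing about bridgeless cubic graphs beyond the decomposition into a 1-factor and disjoint cycles, which is precisely the observation you carry out in detail. Your explicit handling of degree-$1$ vertices (which the paper leaves implicit) and your reuse of the $10$-vertex example of \textsc{Fig.} \ref{fig:1factor}c for the lower bound in (i) are both sound.
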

%\begin{proof}
%We can do the same construction of Proposition \ref{prop:degree3} to
%obtain $2n$ ADMs.
%\end{proof}

%The main result of this section is Theorem \ref{teo:paths3}, that states
%that a graph with maximum degree at most three can be partitioned into
%trees with at most three edges.

%\newpage

\subsubsection{General Request Graph}
\label{sec:general} \noindent It turns out that when the request graph is
not restricted to be bridgeless we have that $M(3,3)=3$.

\label{sec:3,3}
\begin{proposition}
\label{prop:counter}
 $M(3,3)=3$.
\end{proposition}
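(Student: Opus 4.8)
The plan is to pin $M(3,3)$ between the integers $2$ and $3$ and then rule out the value $2$. The upper bound $M(3,3)\le 3$ is immediate: by Lemma~\ref{lem:easy}(ii)--(iii) we have $A(n,3,3)\le A(n,1,3)=3n$ for every $n$. Since Lemma~\ref{lem:integer} guarantees that $M(3,3)$ is a natural number and Proposition~\ref{prop: bound1} gives $M(3,3)\ge 2$, it suffices to exhibit one value of $n$ and one request graph $R$ of maximum degree $3$ such that \emph{every} decomposition of $E(R)$ into subgraphs with at most $3$ edges has total cost $\sum_\lambda |V(B_\lambda)|>2n$. Such an $R$ forces $A(n,3,3)>2n$, so $2$ cannot be a valid bound and hence $M(3,3)=3$.

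The observation guiding the construction of $R$ is that cubic graphs with a perfect matching are harmless. Indeed, the path-decomposition built in the proof of Proposition~\ref{prop:degree3} from Corollary~\ref{prop:bridgeless} uses only a $1$-factor together with its complementary $2$-factor, and it places every vertex in exactly two subgraphs, giving cost exactly $2n$. Thus any witness graph must have \emph{no} perfect matching, and by Petersen's theorem it must contain bridges. I would therefore take $R$ to be a cubic graph formed by a single centre vertex $v$ joined by three \emph{bridges} $vu_1,vu_2,vu_3$ to three vertex-disjoint copies of a \emph{triangle-free} gadget, where each $u_i$ is a degree-$2$ attachment vertex of its copy. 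A convenient gadget is $K_{3,3}$ with one edge subdivided (the subdivision vertex playing the role of $u_i$): it is triangle-free, has $7$ vertices and $10$ edges, and its attachment vertex has degree $2$. The resulting $R$ has $n=22$ vertices, is cubic and triangle-free, and has no perfect matching (deleting $v$ leaves three odd components).

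The core of the argument is a counting-plus-divisibility estimate. Since $R$ is triangle-free, every subgraph with at most $3$ edges is a forest, so a piece $B_\lambda$ with $e_\lambda$ edges and $c_\lambda$ components satisfies $|V(B_\lambda)|=e_\lambda+c_\lambda$. Summing, the cost equals $|E(R)|+\sum_\lambda c_\lambda\ge 33+(\#\text{pieces})\ge 33+11=44=2n$, with equality only if all $11$ pieces are connected $3$-edge trees. I would then exclude this equality case. Each bridge $vu_i$ lies in a unique piece $B_i$, and since $vu_i$ is the only link between the $i$-th gadget and the rest of $R$, the gadget edges outside $B_i$ are covered by pieces lying entirely inside that gadget; as each gadget has $10\equiv 1\pmod 3$ edges, the number $x_i$ of $i$-th-gadget edges inside $B_i$ must satisfy $x_i\equiv 1\pmod 3$, forcing $x_i=1$. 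Hence $B_i$ consists of the bridge, one gadget edge at $u_i$, and one further edge; this last edge can be neither a gadget edge (which would give $x_i=2$) nor a second bridge (two bridges together with their two mandatory gadget edges exceed three edges), and $v$ has no other incident edges, so $B_i$ can only carry two edges. This contradicts the assumption that every piece is a $3$-edge tree, so the three pieces $B_1,B_2,B_3$ each carry fewer than $3$ edges; the decomposition then uses at least $12$ pieces and the cost is at least $45>44$.

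The step I expect to be the main obstacle is precisely this equality-case analysis: one must ensure that disconnected pieces cannot undercut the bound, which is exactly why I first reduce to the situation where all pieces are connected $3$-edge trees (connectedness is what makes the bridge/modular argument valid), and one must apply the modular count to each gadget independently. Once the three short pieces are forced, the strict inequality $\text{cost}\ge 45>2n$ follows at once, completing the proof that $M(3,3)=3$.
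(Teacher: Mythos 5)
Your proof is correct, and in fact your witness graph coincides with the paper's: the graph of \textsc{Fig.}~\ref{fig:counter3}a is precisely a centre vertex joined by three bridges to three copies of $K_{3,3}$ with one subdivided edge (the paper's auxiliary graph $H$ is the unique triangle-free cubic graph on six vertices, namely $K_{3,3}$, and each component $A_i$ is $H$ with one edge subdivided). What is genuinely different is how you rule out cheap decompositions. The paper argues structurally: Claim~\ref{claim:1} shows that some piece through the central vertex meets one component $A_{i^*}$ only in its attachment vertex, so the edges of that component must be decomposed with $a_{i^*}$ lying in a single piece; contracting the two edges at $a_{i^*}$ turns this into decomposing $K_{3,3}$ with a marked edge confined to a piece of at most two edges, which is excluded by counting degree-one vertices (at least $4$ pieces, each with at least $2$ leaves since the graph is triangle-free, while each cubic vertex lying in at most $2$ pieces can be a leaf at most once, giving $8\le n_1\le 6$, absurd). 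You instead use the forest identity $|V(B_\lambda)|=e_\lambda+c_\lambda$ to write the cost as $33+\sum_\lambda c_\lambda\ge 44=2n$, reduce to the equality case (eleven connected trees of three edges each), and kill that case by the divisibility observation $10\equiv 1\pmod 3$ applied inside each gadget, which forces every bridge-piece down to two edges. Both arguments are sound, and yours buys a real strengthening: you lower-bound $\sum_\lambda |V(B_\lambda)|$ over \emph{all} decompositions, hence $\sum_v A(v)>2n$ for arbitrary, possibly non-uniform, ADM assignments; the paper's argument as written only shows that the uniform assignment of two ADMs per vertex fails on $G$, and to conclude $A(n,3,3)>2n$ one must still invoke (implicitly, as in the proof of Lemma~\ref{lem:integer}) the freedom to place the bad request graph on the vertices holding few ADMs. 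Conversely, the paper's structural route localizes the obstruction inside a single bridge-component, which makes the role of bridges transparent and connects cleanly with Corollary~\ref{prop:degree3bis}.
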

\begin{proof}
By \emph{(ii)} and \emph{(iii)} of Lemma \ref{lem:easy} we know that
$M(3,3)\leq 3$. We shall exhibit a counterexample showing that $M(3,3)>
2$, proving the result. Consider the cubic graph $G$ depicted in
\textsc{Fig. }\ref{fig:counter3}a. We will prove that it is not possible
to partition the edges of $G$ into subgraphs with at most 3 edges in such
a way that each vertex appears in at most 2 subgraphs.

\begin{figure}[h!tb]
\begin{center}
\vspace{-0.5cm}
\includegraphics[width=16cm]{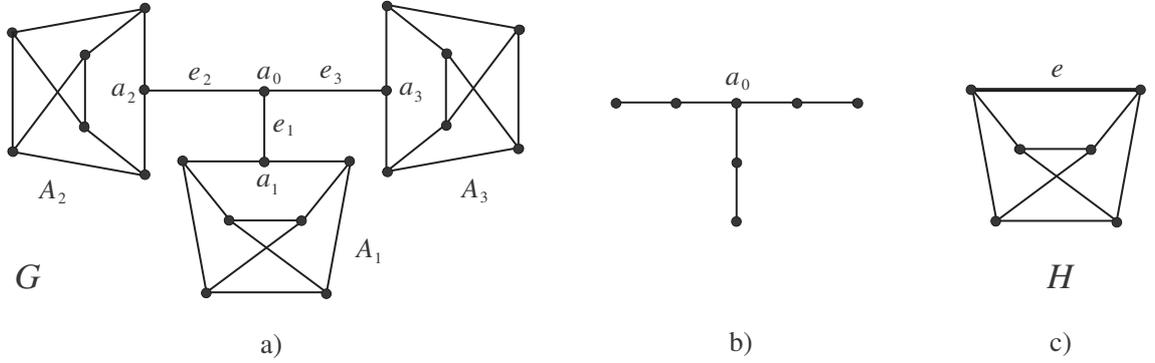}
\caption{\textbf{a)} Cubic graph $G$ that can not be edge-partitioned into
subgraphs with at most 3 edges in such a way that each vertex appears in
at most 2 subgraphs. \textbf{b)} Graph that cannot be partitioned into 2
connected subgraphs with at most 3 edges. \textbf{c)} Counterexample of
Proposition \ref{prop:counter} showing that $M(3,3)=3$}
\label{fig:counter3} \vspace{-0.5cm}
\end{center}
\end{figure}

Indeed, suppose the opposite, i.e. that we can partition the edges of $G$
into (connected) subgraphs $B_1,\ldots,B_k$ with $|E(B_i)| \leq 3$ in such
a way that each vertex appears in at most 2 subgraphs, and let us arrive
at a contradiction.

Following the notation illustrated in \textsc{Fig. }\ref{fig:counter3}a,
let $A_1,A_2,A_3$ be the connected components of $G \setminus
\{e_1,e_2,e_3\}$. Let also, with abuse of notation, $a_i=A_i \cap e_i$,
$i=1,2,3$, and $a_0 = e_1 \cap e_2 \cap e_3$.
\begin{claim}
\label{claim:1} There exist an index $i^* \in \{1,2,3\}$ and a subgraph
$B_{k^*}$ containing $a_0$,
 such that $B_{k^*} \cap A_{i^*} = \{a_{i*}\}$.
\end{claim}
\vspace{-.5cm}
\begin{proof}
Among all the subgraphs $B_1,\ldots,B_k$ involved in the decomposition of
$G$, consider the $\ell$ subgraphs $B_{j_1},\ldots,B_{j_\ell}$ covering
the edges $\{e_1,e_2,e_3\}$. If $\ell=1$, then the subgraph $B_{j_1}$ is a
star with three edges and center $a_0$, and then $B_{j_1} \cap A_i =
\{a_i\}$ for each $i=1,2,3$. If $\ell \geq 3$, then the vertex $a_0$
appears in 3 subgraphs, a contradiction. Hence it remains to handle the
case $\ell=2$. If the claim was not true, it would imply that for each
$i=1,2,3$ it would exist $j_{f(i)}\in \{j_1,j_2\}$ such that $B_{f(i)}
\cap A_{i}$ contains at least one edge. In particular, this would imply
that the graph depicted in \textsc{Fig. }\ref{fig:counter3}b could be
partitioned into 2 connected subgraphs with at most 3 edges, which is
clearly not possible.
\end{proof}
\vspace{-.6cm} Suppose without loss of generality that the index $i^*$
given by Claim \ref{claim:1} is equal to 1. Thus, $a_{1}$ appears in a
subgraph $B_{k^*}$ that does not contain any edge of $A_{1}$. Therefore,
the edges of $A_{1}$ must be partitioned into connected subgraphs with at
most 3 edges, in such a way that $a_{1}$ appears in only 1 subgraph, and
all its other vertices in at most 2 subgraphs. Let us now see that this is
not possible, obtaining the contradiction we are looking for.

Indeed, since $a_{1}$ has degree 2 in $A_{1}$ and it can appear in only
one subgraph, it must have degree two in the subgraph in which it appears,
i.e. in the middle of a $P_3$ or a $P_4$, because $A_{1}$ is
triangle-free. It is easy to see that this is equivalent to partitioning
the edges of the graph $H$ depicted in \textsc{Fig. }\ref{fig:counter3}c
into connected subgraphs with at most 3 edges, in such a way that the
thick edge $e$ appears in a subgraph with at most 2 edges, and each vertex
appears in at most 2 subgraphs. Observe that $H$ is cubic and
triangle-free. Let $n_1$ be the total number of vertices of degree 1 in
all the subgraphs of the decomposition of $H$. Since each vertex of $H$
can appear in at most 2 subgraphs and $H$ is cubic, each vertex can appear
with degree 1 in at most 1 subgraph. Thus, $n_1 \leq |V(H)|= 6$.

Since we have to use at least 1 subgraph with at most 2 edges and
$|E(H)|=9$, there are at least $1+\PartIntSup{\frac{9-2}{3}}=4$ subgraphs
in the decomposition of $H$. But each subgraph involved in the
decomposition of $H$ has at least 2 vertices of degree 1, because $H$ is
triangle-free. Therefore, $n_1 \geq 8$, a contradiction.
%Summing up, $8\leq n_1 \leq 6$, a contradiction.
\end{proof}

\vspace{-0.5cm}

\subsection{Case $C\geq 5$}
\label{sec:5,3}

For $C\geq 5$ we can easily prove that $M(C,3)=2$, making use of a
conjecture made by Bermond \emph{et al.} in 1984 \cite{BFHP84} and proved
by Thomassen in 1999 \cite{Th99}:
\begin{theorem}[\cite{Th99}]
\label{theo:tho} The edges of a cubic graph can be 2-colored such that
each monochromatic component is a path of length at most 5.
\end{theorem}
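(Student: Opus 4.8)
The plan is to recast the statement as an edge-partition problem, reduce to the bridgeless case, and then control the path lengths by a strengthened induction. Color the edges with two colors, red and blue. A color class all of whose components are paths has maximum degree $2$, so in a cubic graph every vertex must meet at least one edge of each color; hence the three edges at each vertex split as $(2,1)$ or $(1,2)$, and each color class is automatically a disjoint union of paths once we also forbid monochromatic cycles. Thus the theorem is equivalent to partitioning $E(G)$ into two \emph{linear forests} in which no path has more than $5$ edges. First I would reduce to connected graphs and then eliminate bridges: if $uv$ is a bridge, deleting it leaves $u$ (resp.\ $v$) with degree $2$, and I would recolor each side using a strengthened hypothesis in which the edge at a distinguished degree-$2$ vertex is \emph{precolored}, so that the two sides can be glued back across the bridge. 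This motivates proving, by induction on $|E(G)|$, a statement for graphs of maximum degree at most $3$ carrying a few precolored pendant edges at low-degree boundary vertices.

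For the bridgeless case the natural tool is already in the paper: Corollary~\ref{prop:bridgeless} provides a $1$-factor $M$ together with a complementary $2$-factor $F=C_1\cup\cdots\cup C_t$ made of disjoint cycles. My first attempt would be to build the coloring on top of this decomposition. Put every matching edge in red; then, to keep red of maximum degree $2$, at most one of the two cycle-edges at each vertex may be red, so the red cycle-edges together with $M$ form a linear forest, while the blue edges are exactly the remaining cycle-edges. Destroying every blue cycle costs only one red edge per $C_i$, and the blue components are the maximal blue runs along the cycles; the requirement becomes that no blue run exceeds five edges and, dually, that the red paths threading through $M$ stay within five edges. Choosing, on each cycle, a periodic pattern of red ``cut'' edges spaced so that both the intervening blue runs and the red segments they create are short is the concrete mechanism I would try to make work.

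The hard part will be to meet all three demands simultaneously and, in particular, to force the \emph{optimal} bound $5$ rather than merely some finite bound. A purely cycle-by-cycle choice is not enough, because a red path can thread through several cycles via the matching edges, so its length depends on choices made on different cycles; symmetrically, the admissible blue runs depend on where the matching already forces red. To keep this under control I would carry the precoloring invariant from the reduction step all the way through the bridgeless construction, using it both to glue across matching edges and to pin down the local pattern at each vertex, and I would verify the length bound by a discharging-type count rather than case-by-case. The real obstacle, I expect, is calibrating this inductive invariant: it must be strong enough to propagate across every gluing yet weak enough to remain satisfiable at the reducible configurations, and one must check that the tight local configurations---those that realize a Hamiltonian path on six vertices and thereby show that $5$ is best possible---can never be chained into a sixth edge. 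Getting exactly $5$, as opposed to an easy bound like $7$ or $9$, is precisely where the delicate bookkeeping lives.
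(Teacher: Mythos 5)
You should note first that the paper does not prove this statement at all: it is Thomassen's theorem, imported verbatim from \cite{Th99} and used as a black box to deduce $M(C,3)=2$ for $C\geq 5$. So there is no in-paper proof to compare against, and your attempt has to stand on its own as a proof of a result that resisted proof for fifteen years. It does not stand: what you have written is a research plan, not an argument. The reduction to the bridgeless case hinges on a strengthened induction hypothesis with precolored pendant edges at low-degree boundary vertices, but you never formulate that hypothesis, so neither the gluing across a bridge nor the base case can be checked; in particular it is not clear that a single precolored edge per boundary vertex suffices, nor which colorings of it the inductive statement must tolerate on each side of the bridge.

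The bridgeless core has the same status. Fixing the entire $1$-factor $M$ from Corollary \ref{prop:bridgeless} to be red is a genuinely rigid commitment: red degree at most $2$ then forces the red ``cut'' edges to form a matching inside each cycle; every cycle of the $2$-factor needs at least one cut edge (else it is a blue cycle) with consecutive cuts at most six apart along the cycle (blue runs of length at most $5$); and a red component is a chain of cut edges linked through edges of $M$, so you must additionally guarantee that no such chain contains three or more cut edges and that no alternating red cycle of the form $C,M,C,M$ appears. A chain of two cuts already realizes a red path $M,C,M,C,M$ of length exactly $5$, so every constraint is met with zero slack --- there is no room for greedy, periodic, or probabilistic selection, and the ``discharging-type count'' you gesture at is never exhibited. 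Whether a globally consistent cut selection always exists is precisely the hard content of the theorem, and you say so yourself (``I would try'', ``the real obstacle \ldots is calibrating this inductive invariant''). Since Thomassen also showed that $5$ cannot be replaced by $4$, the tight examples you mention are not a peripheral nuisance but evidence that the all-of-$M$-red scheme likely discards exactly the recoloring freedom needed; Thomassen's actual proof proceeds instead by a delicate induction on a strengthened statement for graphs of maximum degree $3$ with prescribed colors near vertices of degree less than $3$ --- close in spirit to your precoloring idea, but with the invariant fully specified and the case analysis carried out, which is the part missing here.
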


A \emph{linear $k$-forest} is a forest consisting of paths of length at
most $k$. The \emph{linear $k$-arboricity} of a graph $G$ is the minimum
number of linear $k$-forests required to partition $E(G)$, and is denoted
by $la_k(G)$ \cite{BFHP84}. Theorem \ref{theo:tho} is equivalent to saying
that, if $G$ is cubic, then $la_2(G)=2$.

Let us now see that Theorem \ref{theo:tho} implies that $M(C,3)=2$ for all
. Indeed, all the paths of the linear forests have at most $5$ edges, and
each vertex will appear in exactly $1$ of the $2$ linear forests, so the
decomposition given by Theorem \cite{Th99} is a partition of the edges of
a cubic graph into subgraphs with at most $5$ edges, in such a way that
each vertex appears in at most $2$ subgraphs. In fact the result of
\cite{Th99} is stronger, in the sense that $G$ can be any graph of maximum
degree at most 3. Thus, we deduce that \vspace{-0.05cm}
\begin{corollary}
For any $C\geq 5$, $M(C,3)=2$.
\end{corollary}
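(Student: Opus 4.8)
The plan is to sandwich $M(C,3)$ between matching lower and upper bounds, both equal to $2$. For the lower bound I would invoke the machinery already developed for regular request graphs of large girth. By Corollary \ref{corodelta}, applied to a $3$-regular request graph of girth greater than $C$ (such graphs exist for every value of $C$), one gets $M(C,3) \geq \lceil \frac{3}{2}\rceil = 2$. Equivalently, Proposition \ref{prop: bound1} gives $M(C,3) \geq \lceil \frac{C+1}{C}\cdot\frac{3}{2}\rceil$, and for $C \geq 5$ the quantity $\frac{3(C+1)}{2C}$ satisfies $\frac{3}{2} < \frac{3(C+1)}{2C} \leq \frac{9}{5}$, so its ceiling is exactly $2$. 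Thus $M(C,3) \geq 2$ for every $C \geq 5$.

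For the upper bound the idea is to feed an arbitrary request graph directly into Thomassen's theorem. Given any request graph $R$ with maximum degree at most $3$, Theorem \ref{theo:tho} (in its stronger form, valid for all graphs of maximum degree at most $3$) provides a $2$-colouring of $E(R)$ in which every monochromatic component is a path of length at most $5$. I would take each such monochromatic path to be one subgraph $B_\lambda$ of the decomposition. Since $C \geq 5$, each path has at most $5 \leq C$ edges, so constraint $(i)$ of the problem is satisfied automatically.

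The decisive point is then to count, for each vertex, in how many subgraphs it appears. Each of the two colour classes is a \emph{linear forest}, that is, a vertex-disjoint union of paths; consequently a vertex $v$ lies in at most one path of each colour, hence in at most $2$ of the subgraphs $B_\lambda$. Assigning $A(v) = 2$ to every vertex therefore yields a feasible solution satisfying constraint $(ii)$, with total cost $\sum_{v} A(v) = 2n$. As this holds for every $n$ and every request graph of maximum degree at most $3$, we obtain $A(n,C,3) \leq 2n$, i.e. $M(C,3) \leq 2$. Combining the two bounds gives $M(C,3) = 2$.

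I expect essentially no serious obstacle here, because the hard combinatorial content is entirely absorbed into Theorem \ref{theo:tho}; the remaining work is the bookkeeping that a linear $2$-forest places each vertex in at most two parts, together with the elementary ceiling computation confirming the lower bound is exactly $2$ and not larger. The only point demanding a little care is to use the cited theorem in the form that applies to all graphs of maximum degree at most $3$, not merely to $3$-regular graphs, so that request graphs having vertices of degree strictly less than $3$ are also covered.
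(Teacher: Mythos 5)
Your proposal is correct and follows essentially the same route as the paper: the upper bound comes from Thomassen's theorem (in its stronger form for graphs of maximum degree at most $3$), taking each monochromatic path as a subgraph and noting that each vertex lies in at most one path per colour class, while the lower bound is the paper's Corollary \ref{corodelta} (equivalently Proposition \ref{prop: bound1}). The only difference is that you spell out the lower-bound ceiling computation explicitly, which the paper leaves implicit from its earlier results.
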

\vspace{-0.05cm} Thomassen also proved \cite{Th99} that 5 cannot be
replaced by 4 in Theorem \ref{theo:tho}. This fact do not imply that
$M(4,3)=3$, because of the following reasons: (i) the subgraphs of the
decomposition of the request graph are not restricted to be paths, and
(ii) it is not necessary to be able to find a 2-coloring of the subgraphs
of the decomposition (a \emph{coloring} in this context means that each
subgraph receives a color, and 2 subgraphs with the same color must have
empty intersection).

%This result is stronger that the fact , as it is shown by the following
%example: (cubic graph on 6 vertices...)

\section{Conclusions}
\label{sec:conc} We have considered the traffic grooming problem in
unidirectional WDM rings when the request graph belongs to the class of
graph with maximum degree $\Delta$. This formulation allows the network to
support dynamic traffic without reconfiguring the electronic equipment at
the nodes. We have formally defined the problem, and we have focused
mainly on the cases $\Delta =2$ and $\Delta = 3$, solving completely the
former and solving all the cases of the latter, except the case when the
grooming value $C$ equals $4$. We have proved in Section \ref{sec:3,3}
that $M(3,3)=3$, and in Section \ref{sec:5,3} that $M(C,3)=2$ for all
$C\geq 5$. Because of the integrality of $M(C,\Delta)$ and Lemma
\ref{lem:easy}, $M(4,3)$ equals either 2 or 3. We conjecture that

\vspace{-0.15cm}
\begin{conjecture}
\label{conj:4,3} The edges of a graph with maximum degree at most 3 can be
partitioned into subgraphs with at most 4 edges, in such a way that each
vertex appears in at most 2 subgraphs.
\end{conjecture}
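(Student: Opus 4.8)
The statement is exactly the assertion $M(4,3)=2$. By the integrality of $M$ (Lemma~\ref{lem:integer}) together with the monotonicity $M(5,3)=2\le M(4,3)\le M(3,3)=3$, where $M(5,3)=2$ is the corollary of Section~\ref{sec:5,3}, $M(3,3)=3$ is Proposition~\ref{prop:counter}, and the inequalities are Lemma~\ref{lem:easy}\emph{(iii)}, we already know that $M(4,3)$ equals $2$ or $3$; hence it suffices to produce, for an arbitrary graph $G$ of maximum degree at most $3$, a partition of $E(G)$ into subgraphs of at most $4$ edges in which every vertex lies in at most $2$ of them. The guiding constraint is local: a vertex of degree $3$ must appear with degree $2$ in one part and with degree $1$ in exactly one other part, while vertices of degree at most $2$ are automatically harmless. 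Crucially, the parts are allowed to be arbitrary subgraphs rather than paths, which is what leaves room for the bound to hold even though the linear analogue fails at $C=4$ (Theorem~\ref{theo:tho}). My plan is therefore to start from a decomposition into paths of length $2$ and repair the few offending vertices.

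I first treat the case in which every connected component of $G$ has an even number of edges. Each such component decomposes into copies of $P_3$ (paths with two edges) by the classical $P_3$-decomposition theorem. In this decomposition a vertex of degree $3$ is either the centre of one $P_3$ and an endpoint of another, in which case it lies in $2$ parts, or an endpoint of all three of its incident $P_3$'s, in which case it lies in $3$ parts; I call the latter vertices \emph{bad}, and note that every bad vertex has degree $3$. To repair a bad vertex $v$ I merge two of its three incident $P_3$'s into a single $P_5$ (four edges, hence admissible), placing $v$ at its centre; afterwards $v$ lies in one $P_5$ with degree $2$ and in one leftover $P_3$ with degree $1$, so in exactly $2$ parts. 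To carry out all repairs simultaneously and without conflict I use Hall's theorem twice on the bipartite graph whose parts are the bad vertices and the $P_3$'s, with an edge whenever a bad vertex is an endpoint of a $P_3$: a first matching assigns one $P_3$ to each bad vertex (Hall's condition holds since each bad vertex meets three such $P_3$'s while each $P_3$ meets at most two bad vertices), and after deleting the chosen $P_3$'s a second matching assigns a second one. Because the two matchings use disjoint sets of $P_3$'s, no $P_3$ is merged twice, every part keeps at most four edges, and one checks that every non-bad vertex keeps its number of incident parts unchanged. This disposes of the even case.

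The difficulty lies in components with an odd number of edges, where a pure $P_3$-decomposition cannot exist; for cubic graphs this is the case $n\equiv 2\pmod 4$, and it is precisely the configuration left unresolved. A component with at most $4$ edges is a single part, so the genuine problem is a component with an odd number $m\ge 5$ of edges and minimum degree at least $2$, where no pendant edge is available to absorb the parity. I would peel off one three-edge piece $Q$ (a $P_4$ or a triangle), chosen so that every component of the remainder has even size, decompose each such component into $P_3$'s, and run the Hall--merge repair above while \emph{protecting} $Q$, since merging a $P_3$ onto $Q$ would create a five-edge part. This protection step is the main obstacle: a bad vertex lying on $Q$ has only two available $P_3$'s, so both are forced into its repair, which weakens the counting behind the second Hall matching; and if the two endpoints of $Q$ share a common $P_3$, their forced repairs become genuinely incompatible for that fixed $Q$. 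I would try to sidestep this by choosing $Q$ greedily so that its endpoints are not bad, or by a local exchange rerouting one offending $P_3$ before merging, but guaranteeing such a choice for \emph{every} subcubic graph is exactly what I cannot secure in general --- which is why the statement is recorded as Conjecture~\ref{conj:4,3} rather than proved.
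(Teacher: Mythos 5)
You are attempting to prove Conjecture~\ref{conj:4,3}, which the paper itself does not prove: it is stated precisely because the value of $M(4,3)$ is left open. Your proposal is not a proof either, and you say so yourself --- the case of components with an odd number of edges is exactly where you stop. So the verdict is a genuine gap, one you have candidly flagged. It is worth noting that your route is essentially the authors' own abandoned attempt: the paper's source contains a commented-out proposition claiming $M(4,3)=2$, argued for cubic graphs with $n \equiv 0 \pmod 4$ via a $P_3$-decomposition followed by two successive applications of Hall's theorem, with the case $n \equiv 2 \pmod 4$ left unfinished; it was evidently withdrawn in favour of stating the conjecture.

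Beyond the gap you acknowledge, there is a second, unacknowledged flaw in the case you present as complete. The second application of Hall's theorem is not justified: after deleting the $P_3$'s used by the first matching, a bad vertex need not have two available $P_3$'s left, because the first matching may have consumed $P_3$'s incident to it through \emph{other} bad vertices. Concretely, take $G=K_{2,3}$ with degree-$3$ vertices $u,v$ and the $P_3$-decomposition into the three paths $u$-$x$-$v$, $u$-$y$-$v$, $u$-$z$-$v$. Both $u$ and $v$ are bad, and they are incident to exactly the same three $P_3$'s; after the first matching removes two of them, a single $P_3$ remains for two bad vertices, so the second matching does not exist and Hall's condition fails ($|N(\{u,v\})|=1<2$). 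The conjecture itself holds on this graph --- merge two of the paths into the $4$-cycle $u x v y$ (an admissible subgraph with $4$ edges serving $u$ and $v$ simultaneously), or start instead from the decomposition $x$-$u$-$y$, $x$-$v$-$y$, $u$-$z$-$v$, which has no bad vertices at all --- but this shows the repair must either choose the initial $P_3$-decomposition with care or allow a single merge to fix two bad vertices at once, neither of which your two disjoint matchings provide. So even your even case needs repair, and the odd case remains open, which is precisely why the statement stands in the paper as a conjecture rather than a proposition.
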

\vspace{-0.15cm}

If Conjecture \ref{conj:4,3} is true, it clearly implies that $M(4,3)=2$.
Corollary \ref{prop:degree3bis} states that $M(4,3,\mathcal{C})=2$,
$\mathcal{C}$ being the class of bridgeless graphs of maximum degree at
most 3. Nevertheless, finding the value of $M(4,3)$ remains open.
%For this case we have conjectured the solution in Conjecture
%\ref{conj:4,3}.
We have also deduced lower and upper bounds in the general case (any value
of $C$ and $\Delta$). \textsc{Tab.} \ref{tab:results} summarizes the
values of $M(C,\Delta)$ that we have obtained.

\begin{table}[h!tbp]
\begin{center}
$$
\begin{array}{|c||c|c|c|c|c|c|c|c|}

  \hline
  % after \\: \hline or \cline{col1-col2} \cline{col3-col4} ...
  C\ \setminus \ \Delta & 1 & 2  & 3 & 4 & 5 & 6 & \ldots & \Delta\\
  \hline
  \hline
  1 & 1 & 2  & 3 & 4 & 5 & 6 & \ldots & \Delta \\
  \hline
  2 & 1 &2  & 3 & 4 & 5 & 6 & \ldots & \Delta  \\
  \hline
  3 & 1 &2  & 3 & \geq 3 & \geq 4 & \geq 4 & \ldots & \geq \PartIntSup{\frac{2 \Delta}{3}} \\
  \hline
  4 & 1 &2  & 2 ?? & \geq 3 & \geq 4 & \geq 4 & \ldots & \geq \PartIntSup{\frac{5 \Delta}{8}}\\
  \hline
   5 & 1 &2  & 2 & \geq 3 & \geq 3 & \geq 4 & \ldots & \geq \PartIntSup{\frac{3 \Delta}{5}}\\
  \hline
  \ldots  &1& \ldots  & \ldots & \ldots & \ldots & \ldots & \ldots & \ldots \\
  \hline
  C & 1 & 2 &   2 & \geq \PartIntSup{\frac{C+1}{C} 2} & \geq \PartIntSup{\frac{C+1}{C} \frac{5}{2}}  & \geq \PartIntSup{\frac{C+1}{C} 3} & \ldots & \geq \PartIntSup{\frac{C+1}{C} \frac{\Delta}{2}}\\
  \hline

\end{array}
$$
\caption{Values of $M(C,\Delta)$. The case $C=4,\Delta=3$ is a conjectured
value}\label{tab:results}
\end{center}
 \end{table}

This problem can find wide applications in the design of optical networks
using WDM technology. It would be interesting to continue the study for
larger values of $\Delta$, which will certainly rely on graph
decomposition results. Another generalization could be to restrict the
request graph to belong to other classes of graphs for which there exist
powerful decomposition tools, like graphs with bounded tree-width, or
families of graphs excluding a fixed graph as a minor.

\bibliographystyle{abbrv}
\addcontentsline{toc}{section}{References}
\bibliography{ADM}
%\include{bib}
\iffalse
\newpage
\begin{appendix}
\section{Proof of Lemma \ref{lem:one_by_one}}
\label{ap:1}

\section{Construction in the proof of Proposition \ref{prop:degree2}}
\label{ap:2}
\begin{figure}[h!tb]
\begin{center}
\includegraphics[width=5.5cm]{degree2}
\caption{Construction in the proof of Proposition \ref{prop:degree2}}
\label{fig:degree2}
\end{center}
\end{figure}
\section{Values of $M(C,\Delta)$}
\label{ap:3}
\end{appendix}
\fi
\end{document}